\documentclass[11pt, letterpaper]{article}
\usepackage{amssymb, url}
\usepackage{amsthm}
\usepackage{amsmath}
\usepackage{verbatim}
\usepackage{graphicx}
\usepackage{caption}
\usepackage{epstopdf}
\usepackage{fullpage}
\usepackage{setspace}
\usepackage{enumerate}
\usepackage{float}
\usepackage{tikz}
\usepackage{multicol}
\usepackage{hyperref}
\usetikzlibrary{shapes,arrows,spy,positioning,decorations,calc}%,snakes}
\usepackage{subfigure}
\usepackage[normalem]{ulem}
\usepackage[ruled,vlined,linesnumbered,nofillcomment,noend]{algorithm2e}
\usepackage[hang,flushmargin]{footmisc}

\oddsidemargin=0in \evensidemargin=0in
\topmargin=0.5in

\newcounter{results}[section]

\newtheorem{thm}[results]{Theorem}
\newtheorem{cor}[results]{Corollary}
\newtheorem{lem}[results]{Lemma}
\newtheorem{prop}[results]{Proposition}
\newtheorem{conj}[results]{Conjecture}

\newtheorem{clm}[results]{Claim}

\newcommand{\sci}{\chi_{s}'(G)}
\newcommand{\scil}{\chi_{\ell,s}'(G)}
\DeclareMathOperator{\ex}{ex}
\DeclareMathOperator{\girth}{girth}
\DeclareMathOperator{\mad}{mad}
\DeclareMathOperator{\ct}{ct}
\DeclareMathOperator{\Resp}{Resp}

\tikzset{
insep/.style={inner sep=2pt, outer sep=0pt, circle,fill}, 
free/.style={inner sep=2pt, outer sep=0pt, circle,fill=gray,draw}, 
extra/.style={inner sep=2pt, outer sep=0pt, circle,fill=white,draw}, 
}

\newcommand{\bla}{{\ensuremath{^\perp}}}

%%%%%%%%%%% CASE ANALYSIS MACROS

\newcounter{casenum}	
\newcounter{subcasenum}	
\numberwithin{subcasenum}{casenum}
\newcounter{subsubcasenum}	
\numberwithin{subsubcasenum}{subcasenum}

\setcounter{casenum}{0}
\setcounter{subcasenum}{0}
\setcounter{subsubcasenum}{0}

\renewcommand{\thecasenum}{\arabic{casenum}}
\renewcommand{\thesubcasenum}{\thecasenum.\roman{subcasenum}}

\newcounter{stagenum}

\newenvironment{mycases}
{
  \list{}{%
    \leftmargin0.5cm   % this is the adjusting screw
    \rightmargin0cm%\leftmargin
  }
  \item\relax
	\setcounter{casenum}{0}
}
{	
	\endlist
}
\newenvironment{subcases}
{
  \list{}{%
    \leftmargin0.5cm   % this is the adjusting screw
    \rightmargin0cm%\leftmargin
  }
  \item\relax
}
{	
	\endlist
}

\newcommand{\mycase}[1]{
	\vspace{0.5em}
	
	\refstepcounter{casenum}
	\noindent\hspace{-0.5cm}\textit{Case \thecasenum: #1} 
}

\newcommand{\subcase}[1]{
	\vspace{0.25em}
	
	\refstepcounter{subcasenum}
	\noindent\hspace{-0.5cm}\textit{Case \thesubcasenum: #1} 
}

%%%%%%% END CASE ANALYSIS

\begin{document}

\title{On the Strong Chromatic Index of Sparse Graphs}
\author{ Philip DeOrsey$^{1,6}$ \and
	Jennifer Diemunsch$^{2,6}$ \and 
	Michael Ferrara$^{2,6,7}$ \and 
	Nathan Graber$^{2,6}$ \and
	Stephen G. Hartke$^{3,6,8}$ \and
	Sogol Jahanbekam$^{2,6}$ \and
	Bernard Lidick\'y$^{4,6,9}$ \and
	Luke Nelsen$^{2,6}$ \and
	Derrick Stolee$^{4,5,6}$ \and
	Eric Sullivan$^{2,6}$}
\maketitle

\footnotetext[1]{Department of Mathematics, Emory and Henry College, Emory, VA 24327; {\tt pdeorsey@ehc.edu}.}
\footnotetext[2]{Department of Mathematical and Statistical Sciences, University of Colorado Denver, Denver, CO 80217; {\tt $\{$jennifer.diemunsch,michael.ferrara,nathan.graber,sogol.jahanbekam,luke.nelsen,eric.2.sullivan$\}$@ucdenver.edu}.}
\footnotetext[3]{Department of Mathematics, University of Nebraska--Lincoln, Lincoln, NE 68588; {\tt hartke@math.unl.edu}}
\footnotetext[4]{Department of Mathematics, Iowa State University, Ames, IA 50011; {\tt $\{$lidicky,dstolee$\}$@iastate.edu}}
\footnotetext[5]{Department of Computer Science, Iowa State University, Ames, IA 50011.}
\footnotetext[6]{Research supported in part by NSF grants DMS-1427526, ``The Rocky Mountain - Great Plains Graduate Research Workshop in Combinatorics" and DMS-1500662 ``The 2015 Rocky Mountain - Great Plains Graduate Research Workshop in Combinatorics". For more information about the GRWC, please see {\tt https://sites.google.com/site/rmgpgrwc}.}  
\footnotetext[7]{Research supported in part by a Collaboration Grant from the Simons Foundation (\#206692 to Michael Ferrara).}
\footnotetext[8]{Research supported in part by a Collaboration Grant from the Simons Foundation (\#316262 to Stephen G. Hartke).}
\footnotetext[9]{Research supported in part by NSF grant DMS-126601.}  

\begin{abstract}
The strong chromatic index of a graph $G$, denoted $\sci$, is the least number of colors needed to edge-color $G$ so that edges at distance at most two receive distinct colors. 
The strong list chromatic index, denoted $\scil$, is the least integer $k$ such that if arbitrary lists of size $k$ are assigned to each edge then $G$ can be edge-colored from those lists where edges at distance at most two receive distinct colors. 
We use the discharging method, the Combinatorial Nullstellensatz, and computation to show that if $G$ is a subcubic planar graph with $\girth(G) \geq 41$ then $\scil \leq 5$, answering a question of Borodin and Ivanova [Precise upper bound for the strong edge chromatic number of sparse planar graphs, \textit{Discuss. Math. Graph Theory}, 33(4), (2014) 759--770]. We further show that if $G$ is a subcubic planar graph and  $\girth(G) \geq 30$, then $\sci \leq 5$, improving a bound from the same paper.
Finally, if $G$ is a planar graph with maximum degree at most four and $\girth(G) \geq 28$, then $\sci \leq 7$, improving a more general bound of Wang and Zhao from [Odd graphs and its application on the strong edge coloring, \texttt{arXiv:1412.8358}] in this case.
\end{abstract}

\section{Introduction}

A {\it proper edge-coloring} of a graph $G$ is an assignment of colors to the edges so that incident edges receive distinct colors. 
A {\it strong edge-coloring} of a graph $G$ is an assignment of colors to the edges so that edges at distance at most two receive distinct colors. 
A proper edge-coloring is a decomposition of $G$ into matchings, while a strong edge-coloring is a decomposition of $G$ into \emph{induced} matchings.
Fouquet and Jolivet~\cite{Fouquet83,Fouquet84} defined the {\it strong chromatic index} of a graph $G$, denoted $\sci$, as the minimum integer $k$ such that $G$ has a strong edge-coloring using $k$ colors.
Erd\H{o}s and Ne\v{s}et\v{r}il gave the following conjecture, which is still open, and provided an example to show that it would be sharp, if true.  

\begin{conj}[Erd\H{o}s and Ne\v{s}et\v{r}il \cite{EN1988}]\label{Erdos}
For every graph $G$, $\chi'_s(G) \leq \dfrac{5}{4}\Delta(G)^2$ when $\Delta(G)$ is even, and $\chi'_s(G) \leq \dfrac{1}{4}(5\Delta(G)^2-2\Delta(G)+1)$ when $\Delta(G)$ is odd.
\end{conj}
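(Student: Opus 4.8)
The plan is complicated by the fact that this statement is the Erd\H{o}s--Ne\v{s}et\v{r}il conjecture itself, which remains open in general; a complete proof would be a landmark result, so rather than claim one I will describe the strategy that comes closest and isolate the genuine obstruction. The natural starting point is the trivial greedy bound. Given an edge $e = uv$, the edges within distance two of $e$ are those sharing an endpoint with $e$ (at most $2(\Delta-1)$ of them) together with those incident to a neighbor of $u$ or of $v$ (at most $2(\Delta-1)^2$ of them), for a total of at most $2\Delta^2 - 2\Delta$. Hence coloring the edges one at a time and always choosing an available color shows $\chi'_s(G) \leq 2\Delta^2 - 2\Delta + 1$, which matches the odd-$\Delta$ bound in leading order but is off by nearly a factor of $\tfrac{8}{5}$ from the conjectured $\tfrac{5}{4}\Delta^2$.

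To push below the greedy constant I would use the semi-random (``naive coloring'') method. The key observation is that the $2\Delta^2$ conflicting edges overlap heavily---two edges incident to a common neighbor $w$ of $u$ compete with $e$ but also with each other---so a typical edge does not actually see $2\Delta^2$ distinct forbidden colors. Concretely, I would assign each edge a uniformly random color from a palette of size $c\Delta^2$, retain an edge only when its color keeps the coloring strong-proper, and then use the Lov\'asz Local Lemma together with Talagrand's concentration inequality to show that after one round every edge has, in expectation, enough repeated colors in its distance-two neighborhood that the number of colors still available strictly exceeds the number of as-yet-uncolored conflicting edges. Iterating this wasteful coloring over $O(\log \Delta)$ rounds and finishing greedily yields $\chi'_s(G) \leq c\Delta^2$ for some constant $c < 2$; this is in essence the Molloy--Reed argument and its subsequent refinements.

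The main obstacle is that this method does not reach $c = \tfrac{5}{4}$, and there is good reason to expect that it cannot: the conjecture is sharp, attained by the balanced blow-up of $C_5$ (replace each vertex of a $5$-cycle by $\tfrac{\Delta}{2}$ independent vertices), whose $\tfrac{5}{4}\Delta^2$ edges are pairwise within distance two and so must all receive distinct colors. Any proof achieving the exact constant must therefore exploit structural features of near-extremal configurations rather than purely local probabilistic slack. A realistic complete resolution would split into two regimes: for large $\Delta$ one must close the gap between the best probabilistic constant (still strictly above $\tfrac{5}{4}$) and the conjectured value, presumably by a much finer analysis of how forbidden colors cluster around a given edge; for small $\Delta$---already $\Delta = 4$, where the target is $20$---one is forced into the kind of case analysis, discharging, and computer search used elsewhere in this paper, since no unified argument is known. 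It is precisely this combination of a stubborn leading-constant gap for large $\Delta$ and unresolved base cases for small $\Delta$ that keeps the conjecture open, and I would not expect a short proof.
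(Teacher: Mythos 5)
The statement you were given is the Erd\H{o}s--Ne\v{s}et\v{r}il conjecture itself, which the paper states as an open conjecture and does not prove (it says explicitly that it ``is still open''), so there is no proof of the paper's to compare against, and declining to claim one was the correct response. Your surrounding account is accurate and consistent with the paper's introduction: the greedy argument gives the trivial bound $2\Delta(G)(\Delta(G)-1)+1$ that the paper mentions, the semi-random method is exactly how Molloy and Reed obtained $1.998\Delta(G)^2$ for large $\Delta$ (with Bruhn and Joos claiming $1.93\Delta(G)^2$), and the blow-up of $C_5$ with independent sets of size $\Delta/2$ is the standard extremal example showing the conjectured bound would be sharp, since its $\tfrac{5}{4}\Delta^2$ edges are pairwise within distance two and hence need pairwise distinct colors.
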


%Conjecture~\ref{Erdos} remains open. 
Towards this conjecture, Molloy and Reed~\cite{MR}  bounded $\sci$ away from the trivial upper bound of $2\Delta(G)(\Delta(G)-1)+1$ by showing that every graph $G$ with sufficiently large maximum degree satisfies $\sci\leq1.998\Delta(G)^2$.
Bruhn and Joos~\cite{BJ} have announced an improvement, claiming $\sci \leq 1.93\Delta(G)^2$.

%We focus instead on sparsity conditions that imply that the strong chromatic index is very low.
%If a graph $G$ has two adjacent vertices of degree $d$, then $\chi_s'(G) \geq 2d-1$.
%Given sufficient sparsity conditions, equality can hold.

The focus of this paper is the study of strong edge-colorings of \emph{subcubic graphs}, those with maximum degree at most three, and \emph{subquartic graphs}, those with maximum degree at most four.
%The focus of this paper is the study of strong edge-colorings of graphs with bounded maximum degree, in particular of subcubic graphs, the class of graphs with maximum degree at most three.  
Faudree, Gy\'arfas, Schelp, and Tuza~\cite{Faudree90} studied $\sci$ in the class of subcubic graphs, and gave the following conjectures.

\begin{conj}[Faudree \textit{et al.} \cite{Faudree90}]
Let $G$ be a subcubic graph.

\begin{enumerate}[(1)]\setlength{\itemsep}{0em}
\item $\sci \leq 10$.
\item If $G$ is bipartite, then $\sci \leq 9$. \label{con:bipartite}
\item If $G$ is planar, then $\sci \leq 9$. \label{con:planar}
\item If $G$ is bipartite and for each edge $xy \in E(G)$, $d(x) + d(y) \leq 5$, then $\sci \leq 6$.
\item If $G$ is bipartite and $C_4 \not \subset G$, then $\sci \leq 7$.
\item If $G$ is bipartite and its girth is large, then $\sci \leq 5$. \label{con:girth}
\end{enumerate}
\label{con:Faud}
\end{conj}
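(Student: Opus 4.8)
The plan is to attack the assertion that a bipartite subcubic graph of sufficiently large girth has $\sci \leq 5$ by the same minimal-counterexample framework used for the planar results, namely discharging together with computer-verified reducibility, while confronting head-on the one feature that makes the bipartite version strictly harder. First I would reformulate the problem: a strong edge-coloring with $5$ colors is a proper vertex-coloring of the square $L(G)^2$ of the line graph, and when $\girth(G)$ is large the distance-two neighborhood of every edge is a tree. A short analysis then shows $\omega(L(G)^2)=5$: the three edges at a vertex $u$ together with the two other edges at a neighbor $x$ of $u$ form a $5$-clique, and no $6$-clique survives once short cycles are excluded. Hence $5$ colors are forced, and the target becomes the exact equality $\chi(L(G)^2)=\omega(L(G)^2)$. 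Taking $G$ to be a counterexample minimizing $|E(G)|$, routine arguments delete vertices of degree $1$ and reduce to $G$ connected and essentially $2$-edge-connected.

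The second phase builds a library of reducible configurations centered on degree-$2$ vertices. The basic object is a \emph{thread}, a maximal path whose internal vertices all have degree $2$. Deleting or suppressing part of a thread yields a smaller bipartite graph of the same girth whose strong $5$-coloring must be shown to extend; because $\girth(G)$ is large the constraints around the reinserted edges are tree-like and impose fewer than $5$ forbidden colors at each step, so the extension can be certified. Following the method of the paper, I would encode each extension as the non-vanishing of a graph polynomial, verify the relevant coefficient by the Combinatorial Nullstellensatz, and delegate the finite case analysis to computation. The output of this phase is a list of forbidden configurations consisting of threads and small clusters of nearby degree-$2$ vertices.

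A discharging argument would then assign to each vertex a charge measuring its deviation from degree $3$ and redistribute charge along threads, the large girth guaranteeing that any degree-$2$ vertex can collect enough charge to meet a forbidden configuration and force a contradiction. This would rule out every counterexample that contains a degree-$2$ vertex, leaving only the $3$-regular case. Here lies the genuine obstacle, and it is where the bipartite version diverges from the planar one: a $3$-regular bipartite graph of arbitrarily large girth exists, it has $\mad(G)=3$ and no degree-$2$ vertices at all, so every charge is identically $0$ and discharging yields nothing. None of the sparsity that drives the planar proofs is available, and the full difficulty of the conjecture concentrates into this single case.

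For the cubic case I would abandon discharging and argue directly from bipartite structure. By König's theorem a cubic bipartite graph decomposes into three perfect matchings $M_1,M_2,M_3$, and the distance-two conflicts internal to each $M_i$ are recorded by an auxiliary graph of maximum degree $4$ that, by the large girth, is itself of large girth and locally tree-like. The plan is to split and recombine the three matchings into five induced matchings through a sequence of local recolorings, each justified by a Nullstellensatz certificate on a constant-size tree-like neighborhood, with an augmenting/exchange argument propagating consistency across the graph. I expect this final step to be the hard part and the true content of the conjecture: controlling the three auxiliary conflict graphs simultaneously, so that all five classes remain \emph{induced} matchings everywhere, is precisely the obstruction that keeps the bipartite case open while the planar case, with its governing $\mad$ bound, remains tractable.
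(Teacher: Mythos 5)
The statement you were asked about is not a theorem of the paper at all: it is Conjecture~1.2, the list of conjectures of Faudree, Gy\'arf\'as, Schelp, and Tuza, which the paper records without proof. The paper only reports the status of the items --- (1) was proved by Andersen, (2) by Steger and Yu, (3) was announced by Kostochka \emph{et al.} --- while (4), (5), and the large-girth item (6) remain open. So there is no ``paper's own proof'' to match, and no correct blind proof was possible. Your proposal, to its credit, is honest about this: it addresses only item (6), and it explicitly concedes that the $3$-regular bipartite case is ``the true content of the conjecture'' and is left unresolved. That concession is the gap, and it is fatal: everything before it (deleting $1$-vertices, reducible threads, Nullstellensatz certificates, discharging against $2$-vertices) is machinery imported from the paper's Theorems~1.8--1.10, and that machinery runs entirely on sparsity --- either $\mad(G) < 2 + \frac{1}{7}$ or planarity plus girth, which by Proposition~1.4 is again a $\mad$ bound. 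A cubic bipartite graph of large girth has $\mad(G) = 3$, no $2$-vertices, and no threads, so none of it applies, exactly as you say.

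It is worth seeing concretely why your proposed endgame (K\"onig decomposition into three perfect matchings, then local recolorings with Nullstellensatz certificates) cannot close this case. If $G$ is cubic on $n$ vertices and $M$ is an induced matching of size $k$, then the $2k$ matched vertices send $4k$ edges to unmatched vertices, each of which can absorb at most $3$, so $4k \leq 3(n-2k)$, i.e.\ $k \leq \frac{3n}{10}$. A strong $5$-edge-coloring partitions all $\frac{3n}{2}$ edges into $5$ induced matchings, forcing \emph{every} color class to meet the bound $\frac{3n}{10}$ with equality and every vertex missed by a class to have all three neighbors saturated by it. The coloring is therefore globally rigid: there is no slack for a local exchange or augmenting argument to exploit, and a constant-size Nullstellensatz certificate on a tree-like neighborhood cannot certify a constraint that is tight simultaneously everywhere. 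This is precisely why the paper stops at planar/sparse graphs and why item (6) is still a conjecture; a correct review of your proposal is that it is a plausible reduction of the conjecture to its hard core, not a proof of it. (Separately, even as a plan it covers only one of the six items in the statement.)
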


Several of these conjectures have been verified, including (1) by Andersen~\cite{Andersen} and (2) by Steger and Yu~\cite{StegerYu}.  Quite recently, Kostochka, Li, Ruksasakchai, Santana, Wang, and Yu~\cite{KLRSWY2014} announced an affirmative resolution to (3).
This result is best possible since the prism, shown in Figure \ref{prism}, is a subcubic planar graph with $\sci = 9$.

\begin{figure}[h]
\centering
\begin{tikzpicture}[scale=.6]

\draw (0,0)--(9,0)
(0,0)--(0,4)
(0,4)--(9,4)
(9,4)--(9,0);

\draw (0,0) node[insep]{}
(0,4) node[insep]{}
(9,4) node[insep]{}
(9,0) node[insep]{}
(2,2) node[insep]{}
(7,2) node[insep]{};

\draw (0,0)--(2,2)
(0,4)--(2,2)
(2,2)--(7,2)
(7,2)--(9,4)
(7,2)--(9,0);

\end{tikzpicture}

	\caption{The prism is a subcubic planar graph $G$ with $\sci = 9$.}
	\label{prism}
\end{figure}
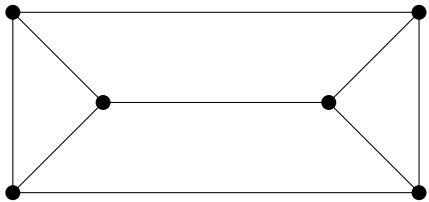

Several papers prove sharper bounds on the strong chromatic index of planar graphs with additional structure~\cite{Fouquet84,Hocquard11,Hocquard13,Hudak}, generally by introducing conditions on maximum average degree or girth to ensure that the target graph is sufficiently sparse.  
For a graph $G$, the {\it maximum average degree} of $G$, denoted $\mad(G)$, is the maximum of average degrees over all subgraphs of $G$.
Hocquard, Montassier,  Raspaud, and Valicov~\cite{Hocquard11, Hocquard13} proved the following.
% about subcubic graphs.  

\clearpage
\begin{thm}[Hocquard \textit{et al.} \cite{Hocquard13}]\label{thm:hocquard}
Let $G$ be a subcubic graph.
\begin{enumerate}
\item If $\mad(G) < \frac{7}{3}$, then $\sci \leq 6$.  \label{part1}
\item If $\mad(G) < \frac{5}{2}$, then $\sci \leq 7$. \label{part2}
\item If $\mad(G) < \frac{8}{3}$, then $\sci \leq 8$.
\end{enumerate}
\label{thm:Hoc}
\end{thm}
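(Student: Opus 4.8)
The plan is to prove each of the three parts by the discharging method applied to a minimal counterexample, so I fix one part, say $\sci \leq 6$ under the hypothesis $\mad(G) < \tfrac{7}{3}$, and suppose it fails. Let $G$ be a counterexample minimizing $|V(G)| + |E(G)|$; then $\mad(G) < \tfrac{7}{3}$ but $G$ has no strong $6$-edge-coloring, while every proper subgraph does (note that $\mad$ is monotone under taking subgraphs, so every subgraph still satisfies the hypothesis and may be colored by minimality). I may assume $G$ is connected. Assign to each vertex the initial charge $\mu(v) = d(v) - \tfrac{7}{3}$; since $\mad(G) < \tfrac{7}{3}$, summing over vertices gives $\sum_{v} \mu(v) = 2|E(G)| - \tfrac{7}{3}|V(G)| < 0$. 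The contradiction will come from a set of discharging rules that redistributes charge so that every vertex ends with nonnegative charge.

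The structural input is a list of \emph{reducible configurations} — local patterns that cannot appear in $G$. To show a configuration is reducible I would delete a carefully chosen edge set $F$ (typically a pendant edge, or an edge in the interior of a thread of degree-$2$ vertices, or a small block around a light $3$-vertex), obtain a strong $6$-edge-coloring of $G - F$ by minimality, and then color the edges of $F$ one at a time in a good order. An uncolored edge $e = xy$ can be assigned a color as long as the number of distinct colors already present on edges within distance two of $e$ is at most $5$. This \emph{conflict count} is bounded by $(d(x)-1) + (d(y)-1) + \sum_{w \sim x,\, w \neq y}(d(w)-1) + \sum_{w \sim y,\, w \neq x}(d(w)-1)$, which is at most $12$ in general but drops sharply whenever $x$, $y$, or their neighbors lie on a thread of degree-$2$ vertices. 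Thus the reducible configurations are exactly the light local patterns whose interior edges can be uncoloured and greedily recoloured within the budget of $6$ colors.

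For the discharging itself, the natural rules send charge from $3$-vertices (which carry the surplus $3 - \tfrac{7}{3} = \tfrac{2}{3}$) toward the needy $2$-vertices (each carrying $2 - \tfrac{7}{3} = -\tfrac{1}{3}$) and $1$-vertices, for instance by having each $3$-vertex donate a fixed amount, such as $\tfrac{1}{3}$, into each incident light thread. A $2$-vertex that is not part of a forbidden long thread then lies within reach of enough $3$-vertices to recover a nonnegative charge, while the configuration restrictions guarantee that no $3$-vertex ever donates more than its surplus $\tfrac{2}{3}$. For parts (2) and (3) the larger color budgets of $7$ and $8$ make strictly more configurations reducible, which relaxes the structural restrictions and in turn permits the weaker sparsity thresholds $\mad(G) < \tfrac{5}{2}$ and $\mad(G) < \tfrac{8}{3}$; the discharging is then re-tuned with the constant $\tfrac{5}{2}$ or $\tfrac{8}{3}$ in place of $\tfrac{7}{3}$ and the donation amounts adjusted accordingly.

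The hard part will be twofold and the two halves are intertwined. First, verifying reducibility requires a delicate color-availability count: for an edge both of whose endpoints have degree $3$ the conflict count can reach exactly the budget, so one cannot simply greedily extend a single edge but must uncolour a connected block and recolour in a prescribed order — sometimes recolouring edges that were already coloured — to guarantee success, and this case analysis is the bulk of the work. Second, the list of reducible configurations must be rich enough that any graph avoiding all of them necessarily has maximum average degree at least the threshold; establishing this \emph{unavoidability} is precisely what the discharging argument accomplishes, and forcing the donation amounts to balance at the exact constants $\tfrac{7}{3}$, $\tfrac{5}{2}$, and $\tfrac{8}{3}$ is what pins down the sharpness. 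Making these two halves meet — every dangerous configuration reducible, every irreducible graph dense enough to violate the charge deficit — is the crux of the proof.
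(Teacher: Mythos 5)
This statement is not proved in the paper at all: it is Theorem~\ref{thm:hocquard}, quoted verbatim from Hocquard, Montassier, Raspaud, and Valicov~\cite{Hocquard13}, so there is no in-paper proof to compare against. Judged on its own merits, your proposal correctly identifies the broad strategy that the cited authors use (minimal counterexample, reducible configurations, discharging with initial charge $d(v)-\tfrac{7}{3}$, etc.), but it is a plan rather than a proof, and the gap is exactly where you say the ``hard part'' is. You never exhibit the list of reducible configurations, never verify that any of them is in fact reducible, and never state discharging rules that can be checked. All of the mathematical content of the theorem lives in those two omitted halves; announcing that they must ``meet'' is a restatement of the task, not an argument.

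Two concrete points illustrate why this cannot be waved through. First, your greedy conflict count shows that an uncolored edge in a subcubic graph can see up to $12$ previously colored edges, which exceeds every budget $6$, $7$, $8$ in the theorem; so, as you yourself observe, reducibility cannot be established edge-by-edge and requires uncoloring blocks and recoloring in a prescribed order (in \cite{Hocquard13} this is a substantial case analysis, and the paper under review handles analogous steps for its own configurations by the Combinatorial Nullstellensatz and computer verification --- see Lemma~\ref{lma:caterpillar} and Claims~\ref{clm:Reducible}--\ref{clm:Reducible4}). None of that work is done here. Second, the one numeric rule you do propose is already inconsistent: a $3$-vertex carries surplus $3-\tfrac{7}{3}=\tfrac{2}{3}$, yet donating $\tfrac{1}{3}$ ``into each incident light thread'' can cost up to $3\cdot\tfrac{1}{3}=1>\tfrac{2}{3}$, so the rule as stated does not keep $3$-vertices nonnegative. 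Until the configurations are listed, their reducibility proved, and rules given under which every vertex (including $1$-vertices, whose deficit $1-\tfrac{7}{3}=-\tfrac{4}{3}$ you largely ignore) ends nonnegative, the proposal does not constitute a proof of any of the three parts.
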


\begin{figure}
\centering
\begin{minipage}{.5\textwidth}
  \centering
  \begin{tikzpicture}[scale=1]

\draw (0,0)--(0,2)
(0,2)--(2,2)
(2,2)--(2,0)
(2,0)--(0,0);

\draw
(0,0) node[insep]{}
(0,2) node[insep]{}
(2,2) node[insep]{}
(2,0) node[insep]{}
(1,2) node[insep]{}
(1,3.5) node[insep]{}
(0,2)--(1,3.5)
(1,3.5)--(2,2);

\end{tikzpicture}
  \captionof{figure}{A graph $G$ with $\mad(G) = \frac{7}{3}$ \\ and $\sci > 6$.}
  \label{fig:house}
\end{minipage}%
\begin{minipage}{.5\textwidth}
  \centering
  \begin{tikzpicture}[scale=1]

\draw (-1.5,0)--(1.5,0)
(-1.5,0) node[insep]{}
(-.5,0) node[insep]{}
(.5,0) node[insep]{}
(1.5,0) node[insep]{}
(90:1.5) node[insep]{}
(270:1.5) node[insep]{}
(-1.5,0)--(90:1.5)
(90:1.5)--(1.5,0)
(1.5,0)--(270:1.5)
(270:1.5)--(-1.5,0)
(90:1.5)--(2.5,1.5)
(2.5,1.5)--(2.5,-1.5)
(270:1.5)--(2.5,-1.5)
(2.5,1.5) node[insep]{}
(2.5,-1.5) node[insep]{}
;

\end{tikzpicture}
  \captionof{figure}{A graph $G$ with $\mad(G) = \frac{5}{2}$ \\ and $\sci > 7$.}
  \label{fig:diamond}
\end{minipage}
\end{figure}

Parts (\ref{part1}) and (\ref{part2}) of Theorem~\ref{thm:hocquard} are sharp by the graphs shown in Figures \ref{fig:house} and \ref{fig:diamond}, respectively.
An elementary application of Euler's Formula (see \cite{west}) gives the following.

\begin{prop}
If $G$ is a planar graph with girth $g$ then $\mad(G) < \frac{2g}{g-2}$.
\label{prop:mad}
\end{prop}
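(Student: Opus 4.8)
The plan is a textbook application of Euler's formula, so I will keep the argument brief and concentrate on the reductions that make it clean. Writing $n(H)$ and $m(H)$ for the numbers of vertices and edges of a subgraph $H \subseteq G$, recall that $\mad(G)$ is the maximum of $2m(H)/n(H)$ over all subgraphs $H$ with $n(H)\ge 1$. Since $G$ is finite this maximum is attained by some subgraph, so it suffices to show that every such $H$ satisfies $2m(H)/n(H) < 2g/(g-2)$: a finite maximum of quantities each strictly below a fixed constant is itself strictly below that constant. The average degree of a graph is a weighted mediant of the average degrees of its connected components, and deleting isolated vertices only raises the average degree, so I may assume the maximizing $H$ is connected and has at least one edge.

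I then split on whether $H$ is acyclic. If it is, the crude bound $m(H)\le n(H)-1$ gives $2m(H)/n(H) < 2$, and since $g \ge 3$ we have $2g/(g-2) > 2$, so the claim holds with room to spare. Otherwise $H$ contains a cycle, and as a subgraph of $G$ it has girth at least $g$, since deleting vertices or edges cannot create a shorter cycle. Here the key step is the face count: in the plane embedding of $H$ inherited from $G$, every face is incident to a cycle and hence its boundary walk has length at least $g$, while each edge lies on at most two face boundaries. Summing boundary lengths over the $f$ faces therefore gives $gf \le 2m(H)$, i.e.\ $f \le 2m(H)/g$.

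Feeding this into Euler's formula $n(H) - m(H) + f = 2$ yields
\[
2 \;\le\; n(H) - m(H) + \frac{2m(H)}{g} \;=\; n(H) - \frac{g-2}{g}\,m(H),
\]
so $m(H) \le \frac{g}{g-2}\bigl(n(H)-2\bigr)$, and dividing by $n(H)$ gives
\[
\frac{2m(H)}{n(H)} \;\le\; \frac{2g}{g-2}\cdot\frac{n(H)-2}{n(H)} \;<\; \frac{2g}{g-2}.
\]

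I do not expect any genuine obstacle in this proposition: the Euler-formula computation is entirely routine. The only points that require a moment's care are the opening reductions (passing to a connected subgraph with an edge, and arguing that the strict per-subgraph inequality survives taking the finite maximum) and the separate treatment of the acyclic case, where no bounded face exists and the face-counting inequality is replaced by the elementary bound $m(H)<n(H)$. The girth hypothesis enters exactly once, through the face-length bound $gf \le 2m(H)$.
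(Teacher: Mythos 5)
Your proof is correct and is exactly the argument the paper has in mind: the paper gives no proof of Proposition~\ref{prop:mad}, citing it as ``an elementary application of Euler's Formula (see \cite{west}),'' and your write-up carries out precisely that standard argument (reduce to a connected subgraph, bound face lengths by the girth to get $gf \le 2m$, and combine with Euler's formula). The care you take with the acyclic case and with passing from subgraphs to the maximum is appropriate and introduces no gaps.
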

Theorem \ref{thm:Hoc} and Proposition \ref{prop:mad} yield the following corollary.

\begin{cor}[Hocquard \textit{et al.} \cite{Hocquard13}]
Let $G$ be a subcubic planar graph with girth $g$.

\begin{enumerate}
\item If $g \geq 14$, then $\sci \leq 6$.
\item If $g \geq 10$, then $\sci \leq 7$.
\item If $g \geq 8$, then $\sci \leq 8$.
\end{enumerate}
\end{cor}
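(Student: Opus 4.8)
The plan is to derive all three parts directly by combining Proposition~\ref{prop:mad} with Theorem~\ref{thm:Hoc}; the only real work is to check that each girth threshold forces the maximum average degree below the bound required by the corresponding part of the theorem. Since $G$ is planar with girth $g$, Proposition~\ref{prop:mad} supplies $\mad(G) < \frac{2g}{g-2}$. Writing $f(g) = \frac{2g}{g-2} = 2 + \frac{4}{g-2}$, I would first note that $f$ is strictly decreasing for $g > 2$, so it is enough to verify $f(g) \leq b$ at the threshold value of $g$ and then invoke monotonicity for all larger girth, where $b$ is the relevant bound from Theorem~\ref{thm:Hoc}.

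I would then check each threshold in turn. Solving $\frac{2g}{g-2} \leq \frac{7}{3}$ gives $6g \leq 7(g-2)$, i.e.\ $g \geq 14$, so that $g \geq 14$ yields $\mad(G) < f(g) \leq \frac{7}{3}$ and hence $\sci \leq 6$ by the first part of Theorem~\ref{thm:Hoc}. Solving $\frac{2g}{g-2} \leq \frac{5}{2}$ gives $4g \leq 5(g-2)$, i.e.\ $g \geq 10$, so that $g \geq 10$ yields $\mad(G) < \frac{5}{2}$ and hence $\sci \leq 7$ by the second part. Solving $\frac{2g}{g-2} \leq \frac{8}{3}$ gives $6g \leq 8(g-2)$, i.e.\ $g \geq 8$, so that $g \geq 8$ yields $\mad(G) < \frac{8}{3}$ and hence $\sci \leq 8$ by the third part.

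I do not expect any genuine obstacle here; the only point requiring care is the boundary. At $g = 14$ one has $f(14) = \frac{7}{3}$ exactly, but since the inequality furnished by Proposition~\ref{prop:mad} is strict, $\mad(G) < \frac{7}{3}$ still holds and Theorem~\ref{thm:Hoc} applies; the analogous equalities $f(10) = \frac{5}{2}$ and $f(8) = \frac{8}{3}$ are handled the same way for parts (2) and (3). With these boundary cases dispatched, each implication follows at once.
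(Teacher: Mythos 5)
Your proposal is correct and is exactly the paper's argument: the paper derives this corollary in one line by combining Theorem~\ref{thm:Hoc} with Proposition~\ref{prop:mad}, and your explicit threshold computations (including the observation that the strict inequality in Proposition~\ref{prop:mad} handles the boundary cases $g = 14, 10, 8$ where $\frac{2g}{g-2}$ equals the relevant bound) simply spell out the arithmetic the paper leaves implicit.
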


Note that no non-trivial sparsity condition on a graph $G$ with maximum degree $d$ will guarantee that $\sci < 2d - 1$ since any graph having two adjacent vertices of degree $d$ requires at least $2d-1$ colors to strongly edge-color the graph.
We give sparsity conditions that imply a subcubic planar graph has strong chromatic index at most five and a subquartic planar graph has strong chromatic index at most seven.
Previous work in this direction was initiated by Borodin and Ivanova~\cite{BI}, Chang, Montassier, P\v{e}cher, and Raspaud~\cite{CMPR}, and most recently extended by Wang and Zhao~\cite{WZ}.
The current-best bounds are given by the following two results.

\begin{thm}[Borodin and Ivanova~\cite{BI}]\label{thm:bi}
Let $G$ be a subcubic graph.
\begin{enumerate}
\item If $G$ has girth at least $9$ and $\mad(G) < 2 + \frac{2}{23}$, then $\chi_s'(G) \leq 5$.
\item If $G$ is planar and has girth at least $41$, then $\chi_s'(G) \leq 5$.
\end{enumerate}
\end{thm}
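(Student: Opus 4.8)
The plan is to prove both parts by the discharging method applied to an edge-minimal counterexample, the two parts sharing a single reducibility analysis and differing only in the charge assignment. Suppose $G$ is a counterexample to part~(1) or to part~(2) with the fewest edges. Then $G$ is connected and subcubic, it admits no strong edge-coloring with $5$ colors, yet by minimality every proper subgraph of $G$ does. The whole argument then splits into two tasks: producing a list of \emph{reducible configurations} that cannot occur in such a $G$, and exhibiting a charge assignment whose total is provably negative but which the exclusion of those configurations forces to be non-negative at every vertex (and, for part~(2), every face).

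For the reducibility step, the clean fact I would exploit is that an edge lying in the interior of a long \emph{thread} — a maximal path of consecutive degree-$2$ vertices — has at most four edges within distance two, namely the two neighboring thread edges on each side, and hence can always be given one of the $5$ colors. The girth hypotheses (girth at least $9$ in part~(1), at least $41$ in part~(2)) guarantee that the distance-two neighborhood of an edge is locally tree-like, so that no short cycle inflates this count. Accordingly I would take the minimal counterexample, delete an internal vertex or edge of a thread together with a few edges near a low-degree vertex, color the resulting smaller graph by minimality, and then extend the coloring edge by edge along the thread, performing a single re-coloring near a thread endpoint when the degree-$3$ vertex there contributes an extra constraint. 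The outcome is a catalogue of forbidden configurations governing how short threads may attach to degree-$3$ vertices and how closely degree-$2$ vertices may be packed.

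For the discharging in part~(1) I would assign each vertex the charge $\mu(v)=d(v)-\bigl(2+\tfrac{2}{23}\bigr)$, so that degree-$3$ vertices carry the surplus $\tfrac{21}{23}$ while degree-$\le 2$ vertices are in deficit; since $\mad(G)<2+\tfrac{2}{23}=\tfrac{48}{23}$, the total charge $\sum_v \mu(v)$ is negative. I would then design rules sending charge from degree-$3$ vertices along the incident threads to the deficient degree-$1$ and degree-$2$ vertices, and verify, against the forbidden configurations above, that every vertex finishes with charge at least $0$, contradicting the negative sum. For part~(2) the mad bound alone is insufficient: girth $41$ yields only $\mad(G)<\tfrac{82}{39}$, which exceeds $\tfrac{48}{23}$, so part~(1) does not imply part~(2). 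Instead I would use Euler's formula with the combined assignment $\mu(v)=d(v)-4$ and $\mu(f)=\ell(f)-4$, whose total is $-8$. Here every face carries charge at least $41-4=37$, and the rules discharge from faces across their boundary threads to the needy low-degree vertices, with the very same reducible configurations closing the accounting.

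The main obstacle I expect is calibrating the reducibility list against the charging so that the thresholds emerge exactly at $2+\tfrac{2}{23}$ and at girth $41$. Each additional reducible configuration tolerates a slightly larger maximum average degree or a slightly smaller girth, and the sharp constants are attained only when the collection of configurations is rich enough that no vertex or face can end with negative charge. Tuning the discharging rules and the re-coloring arguments so that these two sides meet precisely, rather than leaving a gap that would weaken the bound, is the delicate heart of the proof.
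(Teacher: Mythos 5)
There is a genuine gap, and it sits exactly at the crux of the theorem: your reducibility step. You treat a thread as a pure path of degree-$2$ vertices and claim an interior edge has at most four edges within distance two, so that greedy extension with $5$ colors succeeds. That count is only valid when no thread vertex carries a pendant edge. In strong edge-coloring, degree-$1$ vertices cannot be discarded or colored trivially: a pendant edge hanging from a thread vertex conflicts with up to six other edges, and an interior thread edge whose endpoints both carry pendant legs conflicts with up to eight. This is not a technicality --- the graphs $S_3$, $S_4$, $S_7$ of Figure~\ref{fig:S3S4S5} have $\mad = 2$ (as sparse as possible) yet satisfy $\chi_s' = 6$, precisely because of their pendant edges. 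A minimal counterexample must therefore be analyzed through its contracted graph $\ct(G)$, and the configuration that must be shown reducible is the \emph{caterpillar}: a thread of $2\bla$-vertices each of which may carry a pendant leg in $G$. Your greedy/one-recoloring extension does not apply to that configuration, and your proposal never engages with $1$-vertices at all (nor with the cut-edge/connectivity issue they force, cf.\ Lemma~\ref{lma:cutedge}).

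Moreover, no argument of the kind you sketch can be repaired locally, because it would apply equally well to shorter caterpillars, and the $7$-caterpillar is genuinely \emph{not} $5$-reducible: as noted after Lemma~\ref{lma:caterpillar}, there is a $5$-coloring of the external edges that does not extend to the $7$-caterpillar, even with identical lists. The true content behind Theorem~\ref{thm:bi} is that the $8$-caterpillar, and nothing shorter, is $5$-reducible (Claim~\ref{claim:red_cat}); the paper proves the stronger list version (Lemma~\ref{lma:caterpillar}) via the Combinatorial Nullstellensatz with a computer-verified nonzero coefficient, and Borodin--Ivanova's original argument is a careful case analysis, not a greedy one. Once that reducibility is in hand, the discharging half of your plan is not needed in the bespoke form you describe: one quotes Lemma~\ref{lma:cw} with $\ell = 8$ (giving girth $\geq 9$ and $\mad(G) < 2 + \frac{2}{23}$) and Lemma~\ref{lma:nrs} with $\ell = 8$ (giving girth $\geq 5\cdot 8 + 1 = 41$ in the planar case), which is exactly where the constants in the statement come from. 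Your charge assignments are plausible in outline, but the delicate part you defer --- ``calibrating the reducibility list'' --- is the entire theorem, and the mechanism you propose for it fails.
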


\begin{thm}[Wang and Zhao~\cite{WZ}]\label{thm:wz}
Fix $d \geq 4$ and let $G$ be a graph with $\Delta(G) \leq d$.
\begin{enumerate}
\item If $G$ has girth at least $2d-1$ and $\mad(G) < 2 + \frac{2}{6d-7}$, then $\chi_s'(G) \leq 2d-1$.
\item If $G$ is planar and has girth at least $10d-4$, then $\chi_s'(G) \leq 2d-1$.
\end{enumerate}
\end{thm}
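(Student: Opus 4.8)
The plan is to prove Part 1 by the discharging method and then derive Part 2 either as an immediate consequence (when $d=4$) or by a parallel planar discharging argument. Throughout, the guiding intuition is that $2d-1$ is forced only by two adjacent degree-$d$ vertices, so a sufficiently sparse, high-girth graph should have enough ``room'' to recolor around every edge. Concretely, I would take $G$ to be an edge-minimal counterexample: $\Delta(G)\le d$, $\girth(G)\ge 2d-1$, $\mad(G)<2+\frac{2}{6d-7}$, with no strong $(2d-1)$-edge-coloring, while every graph with fewer edges satisfying the hypotheses has one. The girth bound guarantees that the ball of radius two around any edge is essentially acyclic, so no short cycle injects extra color constraints.

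The second step is to pin down a list of reducible configurations. The mechanism is standard: delete (or uncolor) a small local structure, color the rest by minimality, and extend. An uncolored edge $e=uv$ is forbidden only the colors appearing on edges within $\dist \le 2$ of $e$, namely at most $(d-1)$ from each of $u,v$ plus those at the far ends of their neighbors' incident edges. The decisive configurations are (i) a vertex of degree $1$, (ii) a \emph{thread} (a maximal path of degree-$2$ vertices) that is short or flanked by low-degree vertices, and (iii) adjacent degree-$2$ vertices. For each I would verify that the number of truly forbidden colors is at most $2d-2$, leaving a free color so the extension succeeds, contradicting minimality. The delicate cases are the borderline ones where a naive count yields exactly $2d-1$ forbidden colors; there I would recolor a single already-colored edge to liberate a color for $e$, or invoke a short algebraic/counting argument. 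This reducibility analysis is the main obstacle: assembling a set of configurations complete enough that the subsequent discharging closes, while certifying that each tight configuration genuinely extends, is the crux of the whole proof.

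The third step is the discharging itself. Assign each vertex $v$ the charge $d(v)-\bigl(2+\frac{2}{6d-7}\bigr)$, so the $\mad$ hypothesis makes the total charge negative. I would then design rules by which vertices of degree $\ge 3$ feed charge along incident threads to the degree-$2$ vertices, in amounts calibrated so that the reducibility of \emph{short} threads forces every degree-$2$ vertex and every higher-degree vertex to finish with nonnegative charge. The constant $6d-7$ is engineered exactly for this balance: it encodes the maximal thread length and charge demand that the reducible configurations permit. Since all final charges would be nonnegative, the total is nonnegative, contradicting its negativity; hence no counterexample exists.

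Finally, for Part 2, when $d=4$ the hypothesis $\girth(G)\ge 10d-4=36$ makes $\mad(G)<\frac{2\cdot 36}{34}=2+\frac{2}{17}$ by Proposition~\ref{prop:mad}, which is precisely the Part~1 threshold, so Part~1 applies verbatim. For $d>4$ the girth $10d-4$ is strictly smaller than what the generic $\mad$ bound would require, so I would instead rerun the discharging directly on the planar embedding via Euler's formula, giving charge $d(v)-4$ to each vertex and $\ell(f)-4$ to each face (total $-8$) and redistributing across both vertices and faces. The surplus charge stored on the large faces—forced to be large by the girth bound—is exactly what allows the planar argument to beat the generic $\mad$ estimate. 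As before, the lengthy charge bookkeeping is routine once the reducible configurations and the balancing constant are fixed.
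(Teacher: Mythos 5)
The central gap is in your reducibility step, and it is fatal as written. The configurations you propose to reduce---a $1$-vertex, a \emph{short} thread, and two adjacent degree-$2$ vertices---are not reducible by the counting you describe, and the count ``at most $2d-2$ forbidden colors'' is false for each of them. For a pendant edge $uv$ with $d(v)=1$ and $d(u)=d$, the previously colored edges within distance two of $uv$ can number $(d-1)+(d-1)^2$, far more than $2d-1$; strong edge-coloring admits no naive degree-$1$ reduction, which is exactly why this paper works with $\ct(G)$ and $d$-expansions rather than deleting pendant vertices, and why the graphs $S_3$, $S_4$, $S_7$ of Figure~\ref{fig:S3S4S5} (with $\mad = 2$ and many pendant edges) still require $6 > 2\cdot 3 - 1$ colors. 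Likewise an edge joining two $2$-vertices can see $2 + 2(d-1) = 2d$ colored edges, so no free color is guaranteed. Your logic is also inverted: in this style of argument the sparsity hypothesis makes a \emph{long} thread unavoidable, so the configuration whose reducibility must be proved is the long $(2d-2)$-caterpillar of Claim~\ref{claim:red_cat}(2) (a path of $2d-2$ consecutive $2\bla$-vertices together with all their pendant legs). That reducibility is the crux of Wang and Zhao's proof and cannot be obtained by counting forbidden colors edge by edge, since the naive count for every single edge of the thread exceeds $2d-1$; one must color all the uncolored thread edges simultaneously, which for $d=3$ this paper can only accomplish via the Combinatorial Nullstellensatz plus computer algebra (Lemma~\ref{lma:caterpillar}), and which Wang and Zhao accomplish via the structure of odd graphs. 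Note finally that if your short-thread and adjacent-$2$-vertex reductions were valid, a minimal counterexample would contain no two adjacent $2$-vertices, so the girth and $\mad$ hypotheses (and your discharging) would be superfluous---a strong sign those reductions cannot hold.

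For comparison, the proof this paper points to needs no new discharging at all: part 1 is exactly Lemma~\ref{lma:cw} with $\ell = 2d-2$, since girth at least $\ell + 1 = 2d-1$ and $\mad(G) < 2 + \frac{2}{3\ell - 1} = 2 + \frac{2}{6d-7}$ force a $1$-vertex or a $(2d-2)$-thread in $\ct(G)$; the $1$-vertex of $\ct(G)$ is ruled out in a minimal counterexample by a cut-edge argument (as in Lemma~\ref{lma:cutedge}), and the thread is handled by the caterpillar reducibility. So the constant $6d-7$ is just $3\ell - 1$, not something to re-derive. Your observation that part 2 for $d=4$ follows from part 1 via Proposition~\ref{prop:mad} is correct (girth $36$ gives $\mad < 2 + \frac{2}{17}$), but for $d > 4$ you need not design a vertex-face discharging: Lemma~\ref{lma:nrs} with $\ell = 2d-1$ requires girth exactly $5(2d-1)+1 = 10d-4$ and again yields a $1$-vertex or a thread containing a $(2d-2)$-thread. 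Either way, both parts stand or fall on the reducibility of the long caterpillar, which your proposal does not establish.
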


%This is the last interesting case to consider, as two adjacent vertices of degree three will force $\sci \geq 5

%\begin{figure}[ht]
%
%	\centering
%	\begin{tikzpicture}[scale=.5]
%
%\draw (0,0) node[insep]{}
%(0,4) node[insep]{}
%(9,4) node[insep]{}
%(9,0) node[insep]{}
%(2,2) node[insep]{}
%(7,2) node[insep]{};
%
%\draw (0,0)--(2,2)
%(0,4)--(2,2)
%(2,2)--(7,2)
%(7,2)--(9,4)
%(7,2)--(9,0);
%
%\end{tikzpicture}
%	%\includegraphics[height = 1.5in]{tree.eps}
%	
%	\caption{ A graph $G$ with $\sci \geq 5$.}
%	\label{tree}
%\end{figure}

One barrier to proving sparsity conditions that imply $\sci \leq 5$ is that there exist graphs $G$ with $\mad(G) = 2$ and $\sci = 6$.
Let $S_3$ be a triangle with pendant edges at each vertex, and let $S_4$ be a $4$-cycle with pendant edges at two adjacent vertices. For $k \geq 5$, let $S_k$ be a $k$-cycle with pendant edges at each vertex.
Each of $S_3$, $S_4$ and $S_7$ have maximum average degree $2$ and strong chromatic index at least 6, see Figure~\ref{fig:S3S4S5}. However, these graphs are 6-critical with respect to $\sci$, as the removal of any edge from $S_3$, $S_4$ or $S_7$ results in a graph that has a strong edge-coloring using five colors. 

\begin{figure}[h]
\centering
{\usetikzlibrary[topaths]

\tikzset{
insep/.style={inner sep=2pt, outer sep=0pt, circle,fill}, 
free/.style={inner sep=2pt, outer sep=0pt, circle,fill=gray,draw}, 
extra/.style={inner sep=2pt, outer sep=0pt, circle,fill=white,draw}, 
}

%%%%%%%%%%%%%%%%%%% 

\begin{tikzpicture}[scale=1]
\begin{scope}
\draw
\foreach \x in {0,1,2} { (0,0) ++ (90+\x*120:1) node[insep] (x\x){} -- ++(90+\x*120:0.8) node[insep]{} }
(x0)--(x1)--(x2)--(x0)
;
\draw(0,0) ++(0,-2) node{$S_3$};
\end{scope}

\begin{scope}[xshift=4cm]
\draw
\foreach \x in {0,1,2,3} { (0,0) ++ (45+\x*90:1) node[insep] (x\x){}  }
(x0)--(x1)--(x2)--(x3)--(x0)
(x0) -- ++(45+0*90:0.8) node[insep]{}
(x3) -- ++(45+3*90:0.8) node[insep]{}
;
\draw(0,0) ++(0,-2) node{$S_4$};
\end{scope}

\begin{scope}[xshift=8cm]
\draw
\foreach \x in {0,1,...,6} { (0,0) ++ (90+\x*360/7:1) node[insep] (x\x){} --  ++ (90+\x*360/7:0.8) node[insep]{}  }
(x0)--(x1)--(x2)--(x3)--(x4)--(x5)--(x6)--(x0)
;
\draw(0,0) ++(0,-2) node{$S_7$};
\end{scope}
\end{tikzpicture}

%%%%%%%%%%%%%%%%%%% 

}
\caption{Exceptions in Theorem~\ref{thm:sparse}.\label{fig:S3S4S5}}
\end{figure}
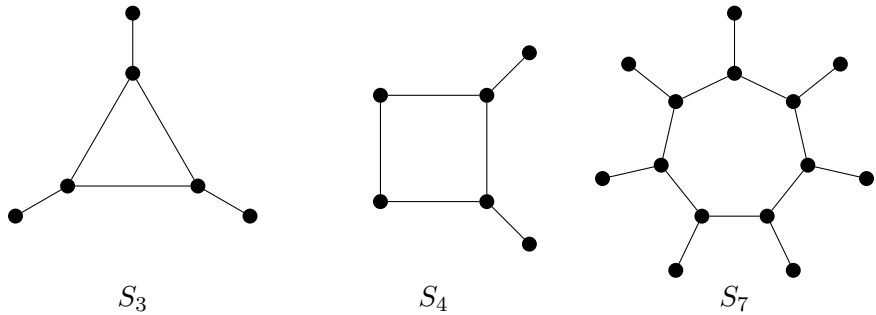

Our main theorem demonstrates that if these few graphs are avoided, and the maximum average degree is not too large, then we can find a strong 5-edge-coloring, improving Theorem~\ref{thm:bi}.

\begin{thm}\label{thm:sparse}
Let $G$ be a subcubic graph.
\begin{enumerate}
\item If $G$ does not contain $S_3$, $S_4$, or $S_7$ and $\mad(G) < 2 + \frac{1}{7}$, then $\sci \leq 5$.
\item If $G$ is planar and has girth at least $30$, then $\sci \leq 5$.
\end{enumerate}
%    If $G$ is a subcubic graph with $\mad(G) < 2+\frac{1}{7}$ that does not contain $S_3$, $S_4$, or $S_7$, then $\sci \leq 5$.
\end{thm}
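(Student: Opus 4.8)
The plan is to deduce the planar statement~(2) from the maximum-average-degree statement~(1), and to establish~(1) by the discharging method applied to a minimal counterexample. For the reduction, a planar graph of girth at least $30$ satisfies $\mad(G) < \frac{2\cdot 30}{30-2} = \frac{15}{7} = 2 + \frac17$ by Proposition~\ref{prop:mad}, and cannot contain $S_3$, $S_4$, or $S_7$, which have girth $3$, $4$, and $7$; hence~(2) follows immediately from~(1).

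For~(1) I would argue by contradiction, taking $G$ to be a counterexample that minimizes $|E(G)|$. Since $\mad$ does not increase under passing to a subgraph, and since a subgraph of an $\{S_3,S_4,S_7\}$-free graph is again $\{S_3,S_4,S_7\}$-free, minimality lets me strongly $5$-edge-color any proper subgraph of $G$. The first task is to compile a list of \emph{reducible configurations}, sparse local substructures that therefore cannot appear in $G$. The natural setting is the square of the line graph $L(G)^2$, because a strong $5$-edge-coloring of $G$ is exactly a proper $5$-coloring of $L(G)^2$. As $\mad(G)$ is barely above $2$, the graph is dominated by degree-$2$ vertices organized into threads (maximal paths of degree-$2$ vertices), so the configurations I expect to forbid are sufficiently long threads together with a bounded catalogue of patterns around degree-$3$ vertices and pendant degree-$1$ vertices; the graphs $S_3$, $S_4$, $S_7$ should emerge as the only sparse structures that remain non-reducible at $\mad = 2$.

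To certify a configuration reducible, I would delete a carefully chosen set of edges (typically an interior segment of a thread or an edge incident to a low-degree vertex), $5$-color the remainder by minimality, and then extend the coloring to the deleted edges. Each extension is a list-coloring problem in which an uncolored edge $e$ has available list $\{1,\dots,5\}$ minus the colors appearing on edges conflicting with $e$ in $L(G)^2$. This is where the Combinatorial Nullstellensatz enters: I would encode the extension as the nonvanishing of the graph polynomial $\prod (x_e - x_f)$, the product taken over conflicting pairs of uncolored edges, and certify that some monomial whose exponents are strictly smaller than the corresponding list sizes has nonzero coefficient, which guarantees a valid assignment. For the more involved configurations these coefficient computations are not feasible by hand, and I would verify them by computer---the role of the ``computation'' mentioned in the abstract.

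With the reducible configurations established, the final step is discharging. Assign each vertex $v$ the charge $\mu(v) = d(v) - \frac{15}{7}$, so that $\sum_v \mu(v) = \sum_v d(v) - \frac{15}{7}|V(G)| < 0$ by the hypothesis on $\mad$. A degree-$3$ vertex carries surplus $\frac67$, while degree-$2$ and degree-$1$ vertices run deficits of $-\frac17$ and $-\frac87$. I would send charge from degree-$3$ vertices to the degree-$2$ vertices along the threads they bound and to adjacent degree-$1$ vertices, then argue that, with every reducible configuration forbidden---crucially, with threads no longer than a fixed length---each needy vertex lies close enough to a degree-$3$ vertex to be paid off, so that every vertex finishes with nonnegative charge; this yields $\sum_v \mu(v) \geq 0$, the desired contradiction. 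The main obstacle is the coupled design of the reducibility list and the discharging rules: the forbidden configurations must be numerous enough that no vertex can be stranded in deficit, yet each must be genuinely reducible by an extension that keeps the reduced graph inside the class (preserving both the $\mad$ bound and $\{S_3,S_4,S_7\}$-freeness), and the three exceptional graphs must persist as the unique non-reducible sparse obstructions. Balancing reducibility against the razor-thin charge budget at $\mad = 2 + \frac17$ is the delicate core of the argument.
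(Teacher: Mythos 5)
Your overall architecture---minimal counterexample, a catalogue of reducible configurations certified by the Combinatorial Nullstellensatz plus computation, then discharging---is indeed the paper's architecture, and your deduction of (2) from (1) is exactly right: girth at least $30$ gives $\mad(G) < \frac{2\cdot 30}{28} = 2+\frac{1}{7}$ by Proposition~\ref{prop:mad}, and also forbids $S_3$, $S_4$, $S_7$ since they contain cycles of lengths $3$, $4$, $7$. The genuine gap is in your discharging step, and as stated it cannot be repaired by any choice of rules.

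You place charge $d(v)-\frac{15}{7}$ on \emph{every} vertex of $G$, so each degree-$1$ vertex carries deficit $\frac{8}{7}$. Pendant edges cannot be excluded from a minimal counterexample (they are precisely why $S_3$, $S_4$, $S_7$ are exceptional), and a degree-$2$ vertex of the contracted graph that carries a pendant is, in $G$, a degree-$3$ vertex paired with a degree-$1$ vertex: net charge $\frac{6}{7}-\frac{8}{7}=-\frac{2}{7}$. Threads of up to seven such pairs cannot be forbidden---the paper notes explicitly that the $7$-caterpillar is \emph{not} reducible---and neither can the configuration $Y(0,7,7)$. So consider a $3$-vertex $v$ whose three threads have lengths $0,7,7$, every thread vertex carrying a pendant: this local structure is consistent with avoiding every reducible configuration, yet the region around $v$ has total charge $14\cdot\left(-\frac{2}{7}\right)+\frac{6}{7}=-\frac{22}{7}$, while the only external surplus is $\frac{6}{7}$ at each far thread endpoint, each of which must also serve its own threads. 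Since discharging conserves charge, no system of local rules can leave every vertex of such a region nonnegative, so the vertex-by-vertex verification your plan requires is impossible. The paper's fix is to discharge on the contracted graph $\ct(G)$ instead: pendant vertices are deleted, each remaining vertex gets charge $d_{\ct(G)}(v)$, the target is final charge at least $2+\frac{1}{7}$ everywhere, and the contradiction is with $\mad(\ct(G))\leq\mad(G)<2+\frac{1}{7}$; reducibility of each configuration is verified in the worst case where every such degree-$2$ vertex carries its pendants, which is why degree-$1$ vertices never enter the accounting. Two further items your sketch would still need after this repair: rules that move charge \emph{between} $3$-vertices (a vertex with responsibility set of size $13$ or $14$, e.g.\ in $Y(0,7,7)$, overdraws its own surplus and must be subsidized by a neighboring $3$-vertex, which is what forces the paper's $H$- and $\Phi$-type configurations into the catalogue), and a separate treatment of the degenerate cases where $\ct(G)$ is a cycle or a theta graph---that, together with $2$-connectivity, is the only place $S_3$, $S_4$, $S_7$ and a handful of computer-checked base cases actually appear.
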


The bound in Theorem~\ref{thm:sparse} is likely not sharp, but is close to optimal.
The graph in Figure~\ref{fig:thetaexample} is subcubic, avoids $S_3$, $S_4$, and $S_7$, and satisfies both $\sci = 6$ and $\mad(G) = 2 + \frac{1}{6}$.
%However, using the bound in Proposition~\ref{prop:mad}, this results in an immediate girth condition for a planar graph $G$ that implies $\sci \leq 5$.  
%This result provides a partial affirmation to part (6) of Conjecture \ref{con:Faud} when applied to planar bipartite graphs.  

\begin{figure}[htp]
\centering
\tikzset{
insep/.style={inner sep=2pt, outer sep=0pt, circle,fill}, 
}

%%%%%%%%%%%%%%%%%%% 
\begin{tikzpicture}[scale=1]
\draw
(0,0) node[insep](a){} 
(5,0) node[insep](b){}

(a) to[bend left=50] node[insep,pos=0.25](x){} node[insep,pos=0.50](y){} node[insep,pos=0.75](z){}  (b)
(x)-- ++(90:0.5)node[insep]{}
(y)-- ++(90:0.5)node[insep]{}
(z)-- ++(90:0.5)node[insep]{}

(a) to[bend right=50] node[insep,pos=0.25](x){} node[insep,pos=0.50](y){} node[insep,pos=0.75](z){}  (b)
(x)-- ++(-90:0.5)node[insep]{}
(y)-- ++(-90:0.5)node[insep]{}
(z)-- ++(-90:0.5)node[insep]{}

(a) to node[insep,pos=0.20](x){} node[insep,pos=0.40](y){} node[insep,pos=0.60](z){} node[insep,pos=0.80](w){}  (b)
(x)-- ++(90:0.5)node[insep]{}
(y)-- ++(90:0.5)node[insep]{}
(z)-- ++(90:0.5)node[insep]{}
(w)-- ++(90:0.5)node[insep]{}
;

\draw
;
%\draw(d) ++(0,-2) node{$\mathrm{mad}(G) = 2+\frac{2}{9} $};
\end{tikzpicture}

\caption{\label{fig:thetaexample}The graph $G = \ex_3(\Theta_{4,5,4})$ with $\mad(G) = 2 + \frac{1}{6}$ and $\sci = 6$.} %\textbf{TODO: Add reference from text.}}
\end{figure}
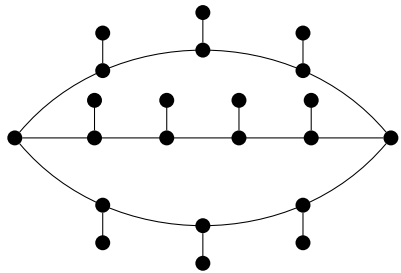

%\begin{thm}
%If $G$ is a subcubic planar graph of girth at least $30$, then $\sci \leq 5$.
%\end{thm}

Using similar methods, we improve the bounds in Theorem~\ref{thm:wz} when $d = 4$.

\begin{thm}\label{thm:sparse4}
Let $G$ be a subquartic graph. % ($\Delta(G) \leq 4$).
\begin{enumerate}
\item If $G$ has girth at least $7$ and  $\mad(G) < 2 + \frac{2}{13}$, then $\chi_s'(G) \leq 7$.
\item If $G$ is planar and has girth at least $28$, then $\chi_s'(G) \leq 7$.
\end{enumerate}
\end{thm}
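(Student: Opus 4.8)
I would first observe that part (2) is an immediate consequence of part (1). By Proposition~\ref{prop:mad}, a planar graph $G$ of girth $g \geq 28$ satisfies $\mad(G) < \frac{2g}{g-2} \leq \frac{2\cdot 28}{28-2} = \frac{28}{13} = 2 + \frac{2}{13}$, and trivially $g \geq 28 \geq 7$. Hence such a $G$ satisfies the hypotheses of part (1), which gives $\sci \leq 7$. So all the real content lies in part (1), and I would devote the entire argument to it, via a minimal-counterexample/discharging scheme paralleling the subcubic proof of Theorem~\ref{thm:sparse}.

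\textbf{Set-up for part (1).}
Suppose part (1) fails, and let $G$ be a counterexample minimizing $|E(G)|$ (we may assume $G$ is connected, else a component is a smaller counterexample). Thus $G$ is subquartic, has $\girth(G)\geq 7$ and $\mad(G) < 2+\frac{2}{13}$, yet has no strong $7$-edge-coloring. The proof has two ingredients: a list of \emph{reducible configurations} that minimality forbids in $G$, and a \emph{discharging} argument showing that any graph with $\mad(G)<2+\frac{2}{13}$ must contain one of them. Throughout, reducibility rests on the fact that deleting edges or vertices yields a subgraph $G'$ that is still subquartic, still has girth at least $7$ (deletion cannot decrease girth), and still satisfies $\mad(G')\leq \mad(G) < 2+\frac{2}{13}$ (maximum average degree is monotone under subgraphs); minimality then strongly $7$-colors $G'$, and the task is to extend the coloring across the deleted edges.

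\textbf{Step 1: reducible configurations.}
The objects to control are the vertices of degree $1$ and $2$ and the \emph{threads} (maximal paths whose internal vertices all have degree $2$) joining the vertices of degree at least $3$. For each candidate configuration I would delete a chosen edge or vertex, color the smaller graph by minimality, and extend. The extension count is the crux: for a single uncolored edge $uv$ in a subquartic graph, the colors forbidden by already-colored edges within distance two number at most $(d(u)-1)+(d(v)-1)$ from incident edges plus up to $3\cdot 3$ from the far neighbors of each of $u$ and $v$, i.e.\ up to $3+3+9+9$ in the worst case, far more than $7$. The girth-$7$ and $\mad$ hypotheses are precisely what force those far neighborhoods to be sparse, so that in most cases fewer than $7$ colors are blocked and a greedy choice succeeds. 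When the greedy margin vanishes, I would instead treat the uncolored edges as variables, encode the strong-coloring constraints as the product $\prod (x_i-x_j)$ over conflicting pairs, and apply the Combinatorial Nullstellensatz: since the relevant exponent for each edge is its number of \emph{uncolored} conflicting neighbors (much smaller than the list size $7$), a nonzero coefficient of the top monomial guarantees a valid assignment even when the greedy bound is tight. The finitely many small, highly constrained configurations that remain I would dispatch by computer search over all partial $7$-colorings. I expect the needed configurations to assert: no degree-$1$ vertex attached to a sparse neighborhood; an absolute upper bound on thread length (a long run of degree-$2$ vertices is recolorable because the constraint along a thread is merely that every three consecutive edges be distinct, trivial with $7$ colors); and caps on how many short threads or degree-$2$ neighbors may hang off a single vertex of degree $3$ or $4$.

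\textbf{Step 2: discharging and the main obstacle.}
Assign each vertex $v$ initial charge $d(v)$, so $\sum_v d(v) = 2|E(G)| < (2+\tfrac{2}{13})|V(G)|$. I would design rules that route charge, along threads, from vertices of degree $3$ and $4$ (which carry surpluses $\frac{11}{13}$ and $\frac{24}{13}$ above the target $\frac{28}{13}$) to the deficient degree-$1$ and degree-$2$ vertices (needing $\frac{15}{13}$ and $\frac{2}{13}$). Using the thread-length bound and the degree-$2$-neighbor caps from Step~1, I would show every vertex finishes with charge at least $\frac{28}{13}$, whence $\sum_v d(v)\geq (2+\frac{2}{13})|V(G)|$, contradicting the $\mad$ hypothesis. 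The main obstacle is squarely the reducibility half: near a degree-$4$ vertex an uncolored edge generates many more distance-two conflicts than in the subcubic case, so the greedy slack disappears near the extremal threshold, and calibrating the precise weight $\frac{2}{13}$ against exactly which configurations can be certified reducible — by the Nullstellensatz and by the computer search — is the delicate engineering that makes the two halves meet.
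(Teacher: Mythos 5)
Your reduction of part (2) to part (1) is exactly the paper's, and your overall template (minimal counterexample, reducible configurations, discharging) matches the paper as well. But your Step 2 has a genuine gap: you discharge on $G$ itself, aiming to bring every vertex \emph{including the degree-$1$ vertices} up to final charge $2+\frac{2}{13}$, and your contradiction uses only the inequality $2|E(G)| < (2+\frac{2}{13})|V(G)|$, i.e.\ only the average degree of $G$. This cannot be made to work, because pendant vertices dilute the average degree of $G$ far below $2+\frac{2}{13}$ in exactly the structures that no realistic list of reducible configurations excludes. Concretely: take the Petersen graph, subdivide each edge of a perfect matching $5$ times and every other edge $3$ times, and attach two pendant edges to every subdivision vertex. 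The resulting graph $H$ is subquartic, has girth at least $20 \geq 7$, contains no $6$-caterpillar, and contains none of the configurations of Claim~\ref{clm:Reducible4} (every branch vertex sees thread lengths $(3,3,5)$, and $Y(3,3,5)$ is not reducible). Yet $|V(H)|=175$ and $|E(H)|=180$, so the average degree of $H$ is about $2.057 < 2+\frac{2}{13}$. Since charge is conserved, \emph{no} discharging rules can end with all $175$ vertices having charge at least $\frac{28}{13}$: that would require total charge at least $\frac{28}{13}\cdot 175 \approx 376.9$, but only $360$ is available. The graph $H$ is of course not a counterexample to the theorem, but only because $\mad(H) \geq \frac{28}{13}$, witnessed by the \emph{subgraph} $\ct(H)$ obtained by deleting the $110$ pendant vertices ($65$ vertices, $70$ edges, average degree exactly $\frac{28}{13}$). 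This is the maneuver your proposal is missing and the paper makes: discharge on the contracted graph $\ct(G)$, where there are no $1$-vertices and the only deficient vertices need just $\frac{2}{13}$, and derive the contradiction by applying the hypothesis $\mad(G) < 2+\frac{2}{13}$ to the subgraph $\ct(G)$, not to $G$ itself.

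There is a second, related flaw in your Step 1. The claim that a long thread is reducible because ``the constraint along a thread is merely that every three consecutive edges be distinct'' ignores that a vertex of degree $2$ in $\ct(G)$ may carry up to two pendant edges in $G$ (since $d=4$); an edge in the middle of such a caterpillar then conflicts with up to $12$ other edges, so greedy extension fails and nothing about threads is trivial. This is precisely why the sharp threshold is the $6$-caterpillar (Wang--Zhao, cited as Claim~\ref{claim:red_cat}), and why the quantitative heart of the paper's proof is the computer-verified list $Y(2,4,4)$, $Y(1,5,5)$, $Y(2,4,5)$, $Y(3,4,4)$, $Y(2,5,5)$ of Claim~\ref{clm:Reducible4}, which your sketch never identifies: these force $|\Resp(v)| \leq 11$ for $3$-vertices and $|\Resp(v)| \leq 20$ for $4$-vertices of $\ct(G)$, numbers that exactly match the surpluses $\frac{11}{13}$ and $\frac{24}{13}$ available in the discharging on $\ct(G)$. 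Without naming and certifying these specific configurations, the ``delicate engineering'' you defer is the entire proof.
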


We also consider a list variation of the strong chromatic index of $G$, first introduced by Vu~\cite{vu}. 
A {\it strong list edge-coloring} of a graph $G$ is an assignment of lists to $E(G)$ such that a strong edge-coloring can be chosen from the lists at each edge. 
The minimum $k$ such that a graph $G$ can be strongly list edge-colored using any lists of size at least $k$ on each edge is the {\it strong list chromatic index} of $G$, denoted $\scil$. 
Borodin and Ivanova~\cite{BI} asked if there are sparsity conditions that imply $\scil \leq 2d-1$ for a planar graph $G$ with maximum degree $d$.
We generalize the bounds in Theorem~\ref{thm:bi} to apply to list coloring.

\begin{thm}\label{thm:mainlist}
Let $G$ be a subcubic graph.
\begin{enumerate}
\item If $G$ has girth at least $9$ and $\mad(G) < 2 + \frac{2}{23}$, then $\scil \leq 5$.
\item If $G$ is planar and has girth at least $41$, then $\scil \leq 5$.
\end{enumerate}
\end{thm}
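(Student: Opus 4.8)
The plan is to argue from an edge-minimal counterexample and combine structural reducibility with discharging, paralleling the proof of Theorem~\ref{thm:bi} but upgrading each reducibility step from a color-counting argument to a genuine list argument via the Combinatorial Nullstellensatz. Suppose $G$ is edge-minimal subject to the hypotheses of part (1) (respectively part (2)) yet $\scil > 5$, and fix $5$-element lists $L(e)$ on $E(G)$ admitting no strong edge-coloring. First I would assemble a finite catalogue of \emph{reducible configurations}: small subgraphs, typically short threads of degree-$2$ vertices hung on low-degree vertices, none of which can appear in $G$.

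To prove reducibility from lists I would use the following scheme. Given a configuration $H \subseteq G$, delete a chosen edge set $F \subseteq E(H)$; by minimality $G - F$ has a strong edge-coloring from the lists, and each $e \in F$ must avoid the colors appearing within distance two, so its effective list $L'(e)$ shrinks but retains a guaranteed size determined by the local structure of $H$. Introducing a variable $x_e$ for each $e \in F$, I form the conflict polynomial
\[
  P \;=\; \prod_{\{e,f\}} \bigl(x_e - x_f\bigr),
\]
the product ranging over the unordered pairs of edges of $F$ lying at distance at most two in $G$. By Alon's Combinatorial Nullstellensatz, a valid extension exists for every list system with $|L'(e)| \ge t_e + 1$ as soon as the coefficient of some monomial $\prod_{e \in F} x_e^{t_e}$ of total degree $\deg P$ is nonzero. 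These coefficients depend only on the fixed, small configuration, so I would verify the required nonvanishing by computer for each catalogue member; this certifies reducibility for \emph{arbitrary} $5$-lists rather than merely in the counting sense available for $\sci$.

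With the catalogue fixed, the remaining task is discharging, and here the two parts diverge. For part (1) I would assign each vertex the charge $d(v) - \bigl(2 + \tfrac{2}{23}\bigr)$, so that $\mad(G) < 2 + \tfrac{2}{23}$ makes the total charge negative, and then move charge from degree-$3$ vertices along threads to the deficient degree-$1$ and degree-$2$ vertices; absence of every reducible configuration would force each final charge to be nonnegative, a contradiction. For part (2) I note that the conclusion does \emph{not} follow from part (1) through Proposition~\ref{prop:mad}, since girth $41$ only gives $\mad(G) < 2 + \tfrac{4}{39} > 2 + \tfrac{2}{23}$; instead I would run a separate discharging argument tailored to planar graphs, using Euler's formula and charging faces so as to exploit that every face has length at least $41$, again contradicting the absence of a reducible configuration.

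I expect the Combinatorial Nullstellensatz step to be the main obstacle. A configuration reducible for ordinary strong edge-coloring by a greedy count need not be reducible from lists, since the decisive monomial coefficient may vanish; when this happens I would have to enlarge $F$, re-examine the distance-two conflict graph, or introduce finer configurations, all while keeping the catalogue large enough for the discharging to close. Reconciling the list-reducibility constraints with the discharging requirements, and confirming the coefficient computations, is where the bulk of the effort lies.
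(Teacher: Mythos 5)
Your overall skeleton --- a minimal counterexample, computer-certified Combinatorial Nullstellensatz reducibility of long degree-2 threads, and discharging that is mad-based for part (1) and face-based for part (2) --- is the same as the paper's. Moreover, the step you feared most is actually fine: your Nullstellensatz setup is equivalent to the paper's. You shrink each list by the colors of nearby already-colored edges and ask for a nonvanishing coefficient in the pure difference polynomial $\prod (x_e - x_f)$, whereas the paper keeps factors $(x_e - c)$ in its polynomial $f$ and targets the monomial $(x_1 \cdots x_7 y_1 \cdots y_8)^4$. Since that monomial has total degree equal to $\deg f$, only the top homogeneous part of $f$ contributes, and the top part of each factor $(x_e - c)$ is just $x_e$; hence the paper's coefficient equals exactly the coefficient your scheme requires, and it is computed to be $-2$ for the $8$-caterpillar. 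The catalogue is also smaller than you anticipate: the $8$-caterpillar is the key configuration (the $7$-caterpillar provably fails), which is why girth $41$, i.e.\ the $\ell = 8$ thread lemma, is the right threshold.

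The genuine gap is elsewhere: your induction has no mechanism for handling cut-edges, and this is precisely where list coloring departs from ordinary coloring. Your part-(2) discharging presumes every face is bounded by a cycle of length at least $41$ along which degree-2 vertices can be counted cleanly; this requires $\ct(G)$ to be $2$-connected. In the non-list setting (Lemma~\ref{lma:cutedge2}) cut-edges are killed by coloring the two sides separately and permuting the colors of one side to agree on the five edges near the bridge; for lists this is impossible, since the lists on the two sides are unrelated. The paper's fix is to strengthen the statement being proved by induction: the graph carries a distinguished vertex $p$ whose incident edges are precolored with distinct colors. That is what makes the cut-edge reduction (Lemma~\ref{lma:cutedge}) work, and it contaminates everything downstream: caterpillars must be defined to avoid $p$, $p$ receives special charge $2d(p)+5$, and faces incident to $p$ get their own discharging rule and case. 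Your proposal has no counterpart to any of this. To close the gap you must either carry such a precolored vertex through the whole argument, as the paper does, or avoid $2$-connectivity altogether by (i) adding to your catalogue the configurations of contracted degree at most one (a $3$-vertex with two pendant edges is easily list-reducible by greedy choice), and (ii) invoking or reproving the thread lemmas (Lemmas~\ref{lma:cw} and~\ref{lma:nrs}) in a form valid for plane graphs that are not $2$-connected; neither route appears in your write-up. A smaller omission of the same flavor: when choosing the deleted set $F$ you must ensure that no two surviving edges are at distance at most two in $G$ but distance greater than two in $G - F$ (this is why the paper deletes \emph{all} edges incident to the thread's interior vertices, pendants included); your generic scheme never imposes this condition.
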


The proofs of Theorems~\ref{thm:sparse}, \ref{thm:sparse4}, and \ref{thm:mainlist} use the discharging method.
We begin by proving Theorem~\ref{thm:mainlist} in Section~\ref{sec:list} as the proof is shorter and the one reducible configuration is used again in the proof of Theorem~\ref{thm:sparse} in Section~\ref{sec:sparse}.

\subsection{Preliminaries and Notation}

Throughout this paper we will only consider simple, finite, undirected graphs. 
We refer to \cite{west} for any undefined definitions and notation.
%For any graph $G$, we let $E(G)$ denote the edge set, $V(G)$ denote the vertex set, and $\Delta(G)$ denote the maximum degree. 
A graph $G$ has vertex set $V(G)$, edge set $E(G)$, and maximum degree $\Delta(G)$.
%We say $G$ is {\it subcubic} if $\Delta(G) \leq 3$. 
%For $S \subseteq V(G)$, let $G[S]$ denote the subgraph of $G$ induced by $S$.

If a vertex $v$ has degree $j$ we refer to it as a {\it $j$-vertex}, and if $v$ has a neighbor that is a $j$-vertex, we say it is a {\it $j$-neighbor} of $v$. When $G$ is planar we let $F(G)$ denote the set of faces of $G$, and $\ell(f)$ denote the length of a face $f$. The {\it girth} of a graph $G$ is length of its shortest cycle.
A graph $G$ is $\{a,b\}$-regular if for every $v$ in $G$, the degree of $v$ is either $a$ or $b$. 
%Every graph with maximum degree $d$ is a subgraph of a $\{1,d\}$-regular graph.
Every graph $G$ with maximum degree $d$ is contained in a prescribed $\{1,d\}$-regular graph, denoted $\ex_d(G)$, the {\it $d$-expansion} of $G$. 
To construct $\ex_d(G)$, add $d-d(v)$ pendant edges to each vertex $v$ in $G$ where $d(v) \in \{2,\dots, d\}$. 
Additionally, let the {\it contracted graph} of $G$, denoted $\ct(G)$ be the graph obtained by deleting all 1-vertices of $G$. 
A vertex $v$ in $G$ is a \emph{$2\bla$-vertex} if $v$ is a 2-vertex in $\ct(G)$.
Thus, for the remainder of the paper a vertex $v$ is a \emph{$k^+$-vertex} in $G$ if it has degree at least $k$ in $\ct(G)$.

We will make use of the discharging method for some of our results.
For an introduction to this method, see the survey by Cranston and West~\cite{CW}.
We will directly use two standard results that can be proven using this method.
Both of Theorems~\ref{thm:bi} and \ref{thm:wz} rely on Lemmas~\ref{lma:cw} and \ref{lma:nrs}.

Let $G$ be a graph and $\ct(G$) be the contracted graph.
An \emph{$\ell$-thread} is a path $v_1\dots v_\ell$ in $\ct(G)$ where each $v_i$ is a $2\bla$-vertex.

\begin{lem}[Cranston and West~\cite{CW}]\label{lma:cw}
If $G$ is a graph with girth at least $\ell+1$ and $\mad(G) < 2 + \frac{2}{3\ell - 1}$, then $\ct(G)$ contains a 1-vertex or an $\ell$-thread.
\end{lem}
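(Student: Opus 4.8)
The plan is to prove the contrapositive via discharging: assume $\ct(G)$ contains neither a $1$-vertex nor an $\ell$-thread, and derive that $\mad(G) \geq 2 + \frac{2}{3\ell-1}$. Since $\mad(G)$ is achieved on some subgraph, it suffices to bound the average degree of $G$ from below, and because the $1$-vertices of $G$ (the pendant vertices introduced in expansions, or genuine leaves) only pull the average down, the natural move is to work with $\ct(G)$ directly and transfer the bound. Concretely, I would assign to each vertex $v$ of $\ct(G)$ the initial charge $\mu(v) = d_{\ct(G)}(v)$. The total charge is $\sum_v d(v) = 2|E(\ct(G))|$, so the average initial charge equals the average degree of $\ct(G)$; the goal is to redistribute charge so that every vertex ends with charge at least $2 + \frac{2}{3\ell-1}$, which forces the average degree, and hence $\mad$, to meet the target.

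The discharging rule I expect to use sends charge from the high-degree vertices (the $3^+$-vertices, i.e. vertices of degree at least $3$ in $\ct(G)$) to the nearby $2^\perp$-vertices lying on threads. Since there is no $1$-vertex, every vertex of $\ct(G)$ has degree at least $2$; a $2^\perp$-vertex starts with charge exactly $2$ and needs an extra $\frac{2}{3\ell-1}$, while a vertex of degree $d \geq 3$ starts with surplus $d - 2 - \frac{2}{3\ell-1}$. The key structural input is the no-$\ell$-thread hypothesis: every maximal run of consecutive $2^\perp$-vertices (a maximal thread) has length at most $\ell - 1$, so it is flanked on both ends by $3^+$-vertices. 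I would have each $3^+$-vertex donate a fixed amount down each incident thread, splitting responsibility for the thread's deficiency between the two endpoints. The arithmetic should be arranged so that a thread of length at most $\ell-1$ contributes a total deficiency that a $3^+$-vertex can cover over its (at least three) incident threads; this is precisely where the constant $\frac{2}{3\ell-1}$ and the factor of $3$ in $3\ell-1$ enter, matching three incident threads each of length up to $\ell-1$.

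The main obstacle will be verifying the final charge of the $3^+$-vertices: after donating along each of its (up to $d \geq 3$) incident threads, a degree-$d$ vertex must retain at least $2 + \frac{2}{3\ell-1}$. The worst case is a $3$-vertex all three of whose incident threads have the maximum length $\ell-1$, so it must supply charge to $3(\ell-1)$ thread vertices, each needing $\frac{2}{3\ell-1}$; one checks that with initial charge $3$ the required donation is $3(\ell-1)\cdot\frac{2}{3\ell-1}$ split appropriately, leaving exactly the target, and that higher-degree vertices have strictly more surplus per unit of demand. I would set up the per-thread donation so that each endpoint pays half of each thread-vertex's deficiency, confirm the balance at every vertex type by a short case check, and conclude that the average degree of $\ct(G)$ is at least $2 + \frac{2}{3\ell-1}$. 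Finally, I would translate this back to $G$: the girth condition $\girth(G) \geq \ell + 1$ guarantees the expansion/contraction relationship is well-behaved and that the subgraph realizing the bound in $\ct(G)$ witnesses $\mad(G) \geq 2 + \frac{2}{3\ell-1}$, completing the contrapositive.
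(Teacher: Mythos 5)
First, note that the paper itself offers no proof of this lemma: it is imported from Cranston and West~\cite{CW} as a known tool, so your proposal can only be judged against the standard discharging proof of that result. Your skeleton matches it, and your arithmetic is correct: place charge $d_{\ct(G)}(v)$ on each vertex of $\ct(G)$, let each endpoint of a thread pay $\frac{1}{3\ell-1}$ to each of the (at most $\ell-1$) degree-2 vertices on that thread, so that every such 2-vertex finishes with $2+\frac{2}{3\ell-1}$, while a vertex of degree $d\geq 3$ finishes with at least $d - d\,\frac{\ell-1}{3\ell-1} = \frac{2\ell d}{3\ell-1} \geq \frac{6\ell}{3\ell-1} = 2+\frac{2}{3\ell-1}$; and bounding the average degree of $\ct(G)$ from below does bound $\mad(G)$ from below, since $\ct(G)$ is a subgraph of $G$.

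The genuine gap is your unjustified assertion that every maximal run of consecutive degree-2 vertices ``is flanked on both ends by $3^+$-vertices.'' This is exactly where the girth hypothesis must enter, and your proposal never actually uses it: the closing remark that girth makes the ``expansion/contraction relationship well-behaved'' is not a use of it. If some component of $\ct(G)$ is a cycle, it contains no $3^+$-vertex at all -- every vertex on it is a $2\bla$-vertex, there is no source of charge, and the discharging collapses. Without girth the statement is simply false: for $\ell=10$ and $G=C_5$ we have $\ct(G)=C_5$ with no $1$-vertex and no $10$-thread (that would require ten distinct vertices), yet $\mad(G)=2<2+\frac{2}{29}$. Since your argument as written never genuinely invokes girth, it would ``prove'' this false girth-free statement, so it must be incomplete. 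The repair is short but essential: because $\girth(G)\geq \ell+1$ and $\ct(G)\subseteq G$, every cycle of $\ct(G)$ has at least $\ell+1$ vertices; hence a $2$-regular component of $\ct(G)$ would contain an $\ell$-thread (contradicting the contrapositive assumption), and likewise a closed thread -- one whose two ends attach to the same $3^+$-vertex -- would create a cycle with at least $\ell$ internal $2$-vertices and again an $\ell$-thread. Only after excluding these two configurations is every maximal thread an open path of at most $\ell-1$ vertices between two distinct $3^+$-vertices, and only then does your per-endpoint payment rule and charge count apply. (A smaller quibble of the same kind: ``no $1$-vertex'' does not by itself forbid isolated vertices of $\ct(G)$, so your claim that $\delta(\ct(G))\geq 2$ also needs a word; that degeneracy, however, is an imprecision in the statement as quoted rather than in your argument.)
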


\begin{lem}[Ne\v{s}et\v{r}il, Raspaud, and Sopena~\cite{NRS}]\label{lma:nrs}
If $G$ is a planar graph with girth at least $5\ell + 1$, then $\ct(G)$ contains a 1-vertex or an $\ell$-thread.
\end{lem}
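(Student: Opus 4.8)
The plan is to prove the contrapositive in the sharpened form: assuming $\ct(G)$ has no $1$-vertex, I will exhibit an $\ell$-thread, arguing by contradiction. Write $H := \ct(G)$. Since $H$ is an induced subgraph of the planar graph $G$, it is planar with $\girth(H) \geq \girth(G) \geq 5\ell+1$, and the absence of $1$-vertices (together with discarding isolated vertices, which contain no thread and arise only from trivial pieces of $G$) lets me assume $\delta(H) \geq 2$. First I dispose of the case in which $H$ has no vertex of degree at least $3$: then every component of $H$ is a cycle, and since each such cycle has length at least $5\ell+1 > \ell$ and consists entirely of degree-$2$ vertices, it already contains $\ell$ consecutive degree-$2$ vertices, i.e.\ an $\ell$-thread. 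So I may assume $H$ has a vertex of degree at least $3$, and I suppress all degree-$2$ vertices of $H$, replacing each maximal path running through degree-$2$ vertices by a single edge, to obtain a planar (multi)graph $H'$ with $\delta(H') \geq 3$.

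The heart of the argument is transferring the girth bound from $H$ to $H'$. Suppose toward a contradiction that $H$ has no $\ell$-thread; then every maximal run of degree-$2$ vertices of $H$ has at most $\ell-1$ vertices, so each edge of $H'$ corresponds to a path in $H$ of length at most $\ell$. Consequently every cycle of $H'$ of length $k$ (in the multigraph sense, counting a loop as $k=1$ and a parallel pair as $k=2$) lifts to a genuine cycle of $H$ of length at most $k\ell$. Since every cycle of $H$ has length at least $5\ell+1$, this forces $k\ell \geq 5\ell+1$, whence $k \geq 6$. In particular $H'$ has no loops and no parallel edges, so $H'$ is in fact simple, and $\girth(H') \geq 6$.

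Finally I reach a contradiction through sparsity. As $H'$ is a simple planar graph with $\girth(H') \geq 6$, Proposition~\ref{prop:mad} gives $\mad(H') < \frac{2\cdot 6}{6-2} = 3$ (the quantity $\frac{2g}{g-2}$ decreases in $g$, so $\girth \geq 6$ suffices). On the other hand $\delta(H') \geq 3$ forces the average degree $\frac{2|E(H')|}{|V(H')|} \geq 3$, and the average degree is at most $\mad(H')$; thus $3 \leq \mad(H') < 3$, a contradiction. I expect the main obstacle to lie in the bookkeeping of the suppression step: one must verify that the multigraph artifacts (loops and parallel edges) are genuinely excluded and that the inequality $k\ell \geq 5\ell+1$ yields $k \geq 6$ rather than merely $k \geq 5$. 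This is precisely where the constant $5\ell+1$ is consumed and cannot be relaxed to $5\ell$, since a girth-$5$, $3$-regular planar graph (for instance the dodecahedron) would survive the Euler count and break the argument.
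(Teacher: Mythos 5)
Your proof is correct, but it takes a genuinely different route from the one the paper relies on. The paper never reproves Lemma~\ref{lma:nrs}: it cites Ne\v{s}et\v{r}il, Raspaud, and Sopena, and its footnote in Section~\ref{sec:list} indicates that the intended argument is the face-based discharging carried out there for $\ell=8$ --- give each vertex charge $2d(v)-6$ and each face charge $\ell(f)-6$, so the total is $-12$ by Proposition~\ref{prop:sum}; let every $2$-vertex pull charge $1$ from each incident face; and check that a face whose boundary carries no $\ell$-thread has at most $\left\lfloor \frac{\ell-1}{\ell}\,\ell(f)\right\rfloor$ degree-$2$ vertices, hence keeps nonnegative charge once $\ell(f)\geq 5\ell+1$. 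You instead suppress the degree-$2$ vertices of $\ct(G)$, show that the girth hypothesis forbids loops and parallel edges in the resulting multigraph $H'$ and forces $\girth(H')\geq 6$ (your inequality $k\ell\geq 5\ell+1$, hence $k\geq 6$, is exactly where the ``$+1$'' is spent), and then collide $\delta(H')\geq 3$ with the Euler bound of Proposition~\ref{prop:mad}. Your route is more elementary and self-contained: no discharging rules, no control of facial walks, no $2$-connectivity preprocessing (the paper arranges that via Lemma~\ref{lma:cutedge} before it can assert every face is a simple cycle), and it yields sharpness for free, since the $(\ell-1)$-fold subdivision of the dodecahedron is planar with girth $5\ell$, no $1$-vertex, and no $\ell$-thread. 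What the discharging formulation buys is locality: because charge is balanced face by face, it absorbs a local exception, which is precisely how Section~\ref{sec:list} adapts the argument to a precolored vertex $p$ allowed to sit inside an over-long thread (rule (R2)). Under suppression that flexibility is lost: the edge of $H'$ through $p$ could have length up to $2\ell$, the lifting bound weakens to $(k+1)\ell\geq 5\ell+1$, i.e.\ $k\geq 5$, and your Euler count no longer closes.

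Two small repairs to your write-up. First, the dichotomy ``$H$ has a vertex of degree at least $3$'' must be applied component by component, since a cycle component could coexist with branch vertices elsewhere and would make suppression degenerate; under your standing hypothesis that no $\ell$-thread exists, any cycle component is already a contradiction, so the fix is one sentence, but the suppression should be performed only after that hypothesis is in place. Second, your dismissal of isolated vertices of $\ct(G)$ papers over a defect of the lemma's statement itself (take $G=K_{1,3}$ with the convention that forests have infinite girth: $\ct(G)$ is a single degree-$0$ vertex, with no $1$-vertex and no thread); this is harmless for the paper, which only invokes the lemma when $\ct(G)$ is $2$-connected, but it is not something your proof --- or any proof --- can repair.
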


\section{Strong List Edge-Coloring of Subcubic Graphs}\label{sec:list}

In this section, we prove Theorem~\ref{thm:mainlist}.
Our proof uses the discharging method, wherein we assign an initial charge to the vertices and faces of a theoretical minimal counterexample. This initial charge is then disbursed according to a set of discharging rules in order to draw a contradiction to the existence of such a minimal counterexample. 
%Let $C$ be a $\{1,3\}$-regular graph and let $D$ be a non-empty subset of $V(C)$. 
%We say that $(C,D)$ is a {\it $5$-list reducible configuration} if for every 5-list assignment of $C$, every strong list edge-coloring of $C-E(C[D])$ extends to a strong list edge-coloring of $C$. 
%A subcubic graph $G$ {\it contains} a list reducible configuration $(C,D)$ if $C[D]$ is an induced subgraph of $\ex_3(G)$. %there exists a homomorphism $\varphi$ from $C$ to $\ex_3(G)$ where $\varphi|_D$ is an injection.
%A graph $G$ is {\it reducible} if it contains a proper subgraph $H$ such that $G-H$ can be strongly edge-colored and this strong edge-coloring can be extended to $G$. We also say that $G$ is reducible when $G-H$ has a strong list edge-coloring that can be extended to a strong list edge-coloring of $G$. Thus, any vertex minimal counterexample cannot contain a reducible subgraph.
%We say a configuration is {\it reducible} if it does not occur in a vertex minimal subcubic planar graph $G$ with girth(G) $\geq 41$ and $\scil > 5$, that is, a vertex minimal counterexample. 
We will often make use of the following, which is another simple and well known application of Euler's Formula.

\begin{prop}
\label{prop:sum}
In a planar graph $G$,

$$\sum_{f \in F(G)} ( \ell(f) - 6 ) + \sum_{v \in V(G)} (2d(v) - 6) = -12.$$

\end{prop}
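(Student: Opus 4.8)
The plan is to derive the identity directly from Euler's Formula together with the two standard double-counting identities for vertex degrees and face lengths. First I would record that for a connected plane graph $G$, Euler's Formula gives $|V(G)| - |E(G)| + |F(G)| = 2$; as is customary in discharging arguments, the proposition is understood for connected $G$ (one reduces to this case, or otherwise replaces the constant by $-6(c+1)$ where $c$ is the number of components).

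Next I would invoke the two handshake-type identities. Summing degrees over all vertices counts each edge twice, so $\sum_{v \in V(G)} d(v) = 2|E(G)|$. Likewise, summing the lengths of all faces counts each edge twice, since every edge lies on the boundary between exactly two faces (a bridge being traversed twice along the boundary of a single face), giving $\sum_{f \in F(G)} \ell(f) = 2|E(G)|$.

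With both identities in hand the computation is immediate. Expanding the left-hand side and substituting yields
\[
\sum_{f \in F(G)} (\ell(f)-6) + \sum_{v \in V(G)} (2d(v)-6) = 6|E(G)| - 6|F(G)| - 6|V(G)| = -6\bigl(|V(G)| - |E(G)| + |F(G)|\bigr),
\]
and Euler's Formula collapses the parenthesized quantity to $2$, producing $-12$.

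There is no genuine obstacle here; the statement is a routine consequence of Euler's Formula. The only points requiring minor care are the convention that $\ell(f)$ counts boundary edges with multiplicity, so that the face-length sum equals $2|E(G)|$ even when bridges are present, and the implicit connectivity assumption needed to obtain exactly $-12$ on the right-hand side.
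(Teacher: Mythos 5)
Your proof is correct and matches the paper's intent exactly: the paper states this proposition as ``another simple and well known application of Euler's Formula'' without writing out details, and your argument (the two handshake identities $\sum_v d(v) = 2|E(G)|$ and $\sum_f \ell(f) = 2|E(G)|$ combined with $|V(G)|-|E(G)|+|F(G)|=2$) is precisely that standard computation. Your side remarks on counting bridge edges twice in face lengths and on the implicit connectivity assumption are accurate and consistent with how the proposition is used in the paper, where the discharging arguments are applied to connected (indeed $2$-connected) minimal counterexamples.
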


We will also use the Combinatorial Nullstellensatz, which will be applied to show we can extend certain list colorings. %We will assign variables to edges and determine a polynomial which is zero whenever edges within distance two receive the same label. 

\begin{thm}[Combinatorial Nullstellensatz \cite{Alon95}]\label{CN}
Let $f$ be a polynomial of degree $t$ in $m$ variables over a field $\mathbb{F}$. If there is a monomial $\prod x_i^{t_i}$ in $f$ with $\sum t_i=t$ whose coefficient is nonzero in $\mathbb{F}$, then $f$ is nonzero at some point of $\prod S_i$, where each $S_i$ is a set of $t_i+1$ distinct values in $\mathbb{F}$.
\end{thm}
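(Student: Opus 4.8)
The plan is to argue by contradiction. Suppose $f$ has the monomial $\prod_i x_i^{t_i}$ with nonzero coefficient $c$ and $\sum_i t_i = t = \deg f$, yet $f$ vanishes at every point of the grid $S := \prod_{i=1}^m S_i$, where $|S_i| = t_i + 1$; I would aim to deduce $c = 0$, a contradiction. The main device is reduction modulo the univariate polynomials $g_i(x_i) := \prod_{s \in S_i}(x_i - s) = x_i^{t_i+1} - r_i(x_i)$, where $\deg r_i \le t_i$. Since each $g_i$ vanishes identically on $S_i$, replacing any occurrence of $x_i^{t_i+1}$ by $r_i(x_i)$ changes neither the value of the polynomial at any point of $S$ nor, as I explain below, its total degree.

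First I would carry out this reduction explicitly: starting from $f$, repeatedly pick a monomial $M$ divisible by $x_i^{t_i+1}$ for some $i$, write $M = x_i^{t_i+1} M'$, and subtract the corresponding multiple $(\text{coeff})\,M' g_i$ of $g_i$, trading $x_i^{t_i+1}$ for the lower-$x_i$-degree polynomial $r_i$. The procedure terminates in a polynomial $\bar f$ with $\deg_{x_i} \bar f \le t_i$ for every $i$, and $\bar f$ agrees with $f$ at every point of $S$, so $\bar f$ also vanishes on all of $S$. I would then invoke the following standard vanishing lemma, proved by induction on the number of variables $m$: \emph{if a polynomial $h$ satisfies $\deg_{x_i} h \le t_i$ for each $i$ and vanishes on all of $\prod_i S_i$ with $|S_i| > t_i$, then $h$ is identically zero}. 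The base case $m = 1$ is just that a nonzero univariate polynomial of degree at most $t_1$ has at most $t_1$ roots; the inductive step groups $h$ by powers of $x_m$, applies the one-variable fact on each fiber to kill the coefficient polynomials at every point of $\prod_{i<m} S_i$, and then applies the inductive hypothesis to those coefficient polynomials. Applying the lemma gives $\bar f \equiv 0$.

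The crux, and the step I expect to require the most care, is tracking the coefficient of $\prod_i x_i^{t_i}$ through the reduction so as to conclude that it is unchanged and hence still equals $c$. The point is that each subtracted term $M' g_i$ has total degree at most $t$: every reduction step is degree non-increasing, since $\deg M' + (t_i+1) = \deg M \le t$, and the replacement $r_i$ has strictly smaller $x_i$-degree. Moreover the \emph{unique} top-degree term of $g_i$ is $x_i^{t_i+1}$, so the degree-$t$ homogeneous part of any multiple $h_i g_i$ that we subtract is itself a multiple of $x_i^{t_i+1}$. Consequently every degree-$t$ monomial appearing in the aggregate $\sum_i h_i g_i$ is divisible by some $x_i^{t_i+1}$. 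Since $\prod_j x_j^{t_j}$ has $x_i$-degree exactly $t_i < t_i + 1$ in every variable, it is divisible by no such power, so it never appears in $\sum_i h_i g_i$. Writing $f = \bar f + \sum_i h_i g_i$, the coefficient of $\prod_j x_j^{t_j}$ in $\bar f$ therefore equals $c \ne 0$, contradicting $\bar f \equiv 0$. The only genuinely delicate bookkeeping is the degree control in this last paragraph, and it is precisely the hypothesis $t = \sum_i t_i = \deg f$, placing $\prod_i x_i^{t_i}$ in top degree, that makes it go through.
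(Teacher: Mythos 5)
This theorem is not proved in the paper at all: it is quoted verbatim as Alon's Combinatorial Nullstellensatz \cite{Alon95} and used as a black box (to extend colorings across the $8$-caterpillar), so there is no internal proof to compare against. Judged on its own, your proof is correct, and it is essentially Alon's original argument: the decomposition $f = \bar f + \sum_i h_i g_i$ with $\deg h_i \le t - (t_i+1)$ obtained by reduction modulo $g_i(x_i) = \prod_{s\in S_i}(x_i - s)$ is exactly the ``Nullstellensatz'' half of Alon's paper, and the coefficient bookkeeping in your final paragraph --- that the degree-$t$ part of each $h_i g_i$ is a multiple of $x_i^{t_i+1}$, hence cannot contribute to $\prod_j x_j^{t_j}$ --- is precisely how Alon deduces the coefficient formulation from it. The two small points you gloss over are both routine and fixable in a line each: termination of the reduction (each step deletes a monomial of degree $d$ and introduces only monomials of degree at most $d-1$, so the multiset of monomial degrees strictly decreases in a well-founded order), and the grid-vanishing lemma, whose induction on $m$ you sketch correctly. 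So the proposal stands as a complete and faithful proof of the cited theorem.
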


The first item of Theorem~\ref{thm:mainlist} follows from the following strengthened theorem.

\begin{thm}
Let $G$ be a planar $\{1,3\}$-regular graph of girth at least $41$, and let $p \in V(G)$.
Assign distinct colors to the edges incident to $p$ and let $L$ be a $5$-list-assignment to the remaining edges of $G$.
There exists a strong edge-coloring $c$ where $c(e) \in L(e)$ for all $e \in E(G)$.
\end{thm}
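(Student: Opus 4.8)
The plan is to argue by induction on $|E(G)|$, treating the stated theorem itself as the inductive statement so that a precolored vertex $p$ is available at every stage. Let $G$ be a counterexample minimizing $|E(G)|$: a planar $\{1,3\}$-regular graph of girth at least $41$ with a vertex $p$ whose incident edges are precolored with distinct colors, a $5$-list-assignment $L$ on the remaining edges, and no strong edge-coloring from $L$ extending the precoloring. Since $41 = 5\cdot 8 + 1$, Lemma~\ref{lma:nrs} guarantees that $\ct(G)$ contains a $1$-vertex or an $8$-thread; in either case $G$ contains a path $v_1\cdots v_8$ of $2^\perp$-vertices, each carrying a single pendant edge $v_iu_i$. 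I would choose an interior block of this thread avoiding $p$, so that the precolored edges at $p$, if nearby, enter only as boundary data.

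The reduction I would use deletes this block rather than shortcutting it, so that no cycle is shortened and the girth bound survives. Concretely, I delete an interior block $v_a,\dots,v_b$ of the thread together with all of its pendants, and I also delete the two pendant edges at the flanking vertices $v_{a-1}$ and $v_{b+1}$; these two vertices then drop to degree $1$. The resulting graph $G'$ is planar, $\{1,3\}$-regular, still has girth at least $41$ (only deletions were performed), still contains $p$ with its precoloring and the restriction of $L$, and has strictly fewer edges. By minimality, $G'$ admits a strong edge-coloring $c'$ from its lists extending the precoloring at $p$.

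It remains to color the deleted edges of $G$, which form a caterpillar $T$ (a subpath of the thread together with its pendant edges) whose two ends abut the already-colored part of $G'$. Here I would invoke the Combinatorial Nullstellensatz (Theorem~\ref{CN}). Give each edge $e$ of $T$ a variable $x_e$ and consider the graph polynomial
\[
P \;=\; \prod_{\substack{e,f\in E(T)\\ \dist(e,f)\le 2}} (x_e - x_f),
\]
the product being over unordered pairs of edges of $T$ at distance at most two. For each $e\in E(T)$ let $L'(e)$ be $L(e)$ with the colors of all already-colored edges within distance two deleted (including any precolored edges at $p$); a value $x_e\in L'(e)$ making $P$ nonzero yields the desired extension. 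Since $|L(e)|=5$ and an interior edge of $T$ loses no colors, while only the few edges within distance two of the two ends are constrained, taking the thread length to be $8$ makes the two ends decouple and keeps each $|L'(e)|$ large enough. The existence of a monomial $\prod x_e^{t_e}$ of full degree with $t_e+1\le |L'(e)|$ for every $e$ and with nonzero coefficient is a finite check on the caterpillar conflict graph, which I would verify by computation.

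The main obstacle is this last step: certifying that the graph polynomial of the thread's conflict structure carries a monomial of the correct multidegree with nonzero coefficient, \emph{simultaneously} with the list-size inequalities $t_e+1\le|L'(e)|$ at the constrained boundary edges. This is exactly where the length $8$ (equivalently the girth bound $41$ via Lemma~\ref{lma:nrs}) must be large enough to give the coloring sufficient freedom, and it is where the computation enters. The $1$-vertex case of Lemma~\ref{lma:nrs} corresponds to a $3$-vertex bearing two pendants, an even more flexible local configuration, which I expect the same deletion-and-extension scheme to dispatch. Finally, to deduce the planar part of Theorem~\ref{thm:mainlist}, I would apply this theorem to $\ex_3(G)$ after first choosing, greedily from the given lists, distinct colors for the edges at an arbitrary vertex $p$.
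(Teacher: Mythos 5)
Your overall strategy is the paper's own: a minimal counterexample, an $8$-caterpillar reduction via the Combinatorial Nullstellensatz with a computer-checked coefficient, and a sparsity argument to locate the caterpillar. The genuine gap is in the locating step. You invoke Lemma~\ref{lma:nrs} to obtain a $1$-vertex or an $8$-thread in $\ct(G)$ and then say you will ``choose an interior block of this thread avoiding $p$.'' But the lemma produces only one such configuration, and nothing prevents it from containing $p$: since $G$ is $\{1,3\}$-regular, $p$ may be a $3$-vertex with a pendant edge, hence a $2\bla$-vertex of $\ct(G)$, and it can sit anywhere inside the one $8$-thread the lemma gives you (likewise, the $1$-vertex of $\ct(G)$ could be $p$ itself, a $3$-vertex with two precolored pendant edges, in which case there is nothing you are allowed to delete and recolor). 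When $p$ is interior to the thread, every sub-block avoiding $p$ has length at most $7$, and the $7$-caterpillar is \emph{not} reducible: as the paper remarks immediately after Lemma~\ref{lma:caterpillar}, there is a $5$-coloring of the external edges that does not extend to a $7$-caterpillar, even when all lists are equal. So your reduction cannot be rescued by shortening the block, and in this case your induction has no configuration to reduce.

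This is precisely why the paper does not use Lemma~\ref{lma:nrs} as a black box. It first proves Lemma~\ref{lma:cutedge} (no cut-edge joining two $3$-vertices), so that $\ct(G)$ is $2$-connected and every face boundary is a cycle, and then runs its own face--vertex discharging in which $p$ receives the inflated initial charge $2d(p)+5$ and, when $p$ is a $2$-vertex of $\ct(G)$, pays $\frac{9}{2}$ to each incident face (rule (R2)). That bespoke argument yields an $8$-caterpillar that by definition avoids $p$, which is exactly the statement your proposal needs but does not have. Your extension step itself (delete the caterpillar's edges, color the smaller graph by minimality, extend by Theorem~\ref{CN} with a computer-verified monomial coefficient) matches the paper's Lemma~\ref{lma:caterpillar} and is sound, as is your handling of a $1$-vertex of $\ct(G)$ when it is not $p$; the missing idea is a $p$-aware replacement for Lemma~\ref{lma:nrs}.
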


\begin{proof}
For the sake of contradiction, select $G$, $p$, $c$, and $L$ as in the theorem statement, and assume there does not exist a strong edge coloring of $E(G)$ using colors from $L$.
In this selection, minimize $n(G)$.
Note that $G$ is connected and $e(G) > 5$.
We can further assume that $d(p) > 1$, since if $d(p)=1$ and $\{p'\}=N(p)$ then we can instead color the edges incident to $p'$.

\begin{lem}\label{lma:cutedge}
There does not exist a cut-edge $uv$ such that $d(u) = d(v) = 3$.
\end{lem}

%
%\begin{figure}[h]
%	\centering
%	\begin{tikzpicture}[scale=1]
%
%\draw (0,0) circle (2cm)
%(9,0) circle (2cm)
%(2,0) node[insep, label=left:{$u$}]{}
%(7,0) node[insep,label=right:{$v$}]{}
%(3,0) node[insep]{}
%(6,0) node[insep]{} 
%(2,0)--(3,0)
%(6,0)--(7,0)
%(2,0)--(.75,1.25)
%(2,0)--(.75,-1.25)
%(3,0)--(4.25,1.25)
%(3,0)--(4.25,-1.25)
%(6,0)--(4.75,1.25)
%(6,0)--(4.75,-1.25)
%(7,0)--(8.25,1.25)
%(7,0)--(8.25,-1.25)
%(.75,1.25) node[insep]{}
%(.75,-1.25) node[insep]{}
%(4.25,1.25) node[insep]{}
%(4.25,-1.25) node[insep]{}
%(4.75,1.25) node[insep]{}
%(4.75,-1.25) node[insep]{}
%(8.25,1.25) node[insep]{}
%(8.25,-1.25) node[insep]{}
%(-.2,0) node{$G_1$}
%(9.2,0) node{$G_2$}
%(1.1,.6) node{$a_1$}
%(1.1,-.6) node{$a_2$}
%(3.9,.6) node{$a_4$}
%(3.9,-.6) node{$a_5$}
%(2.5,-.2) node{$a_3$}
%(5.1,.6) node{$b_1$}
%(5.1,-.6) node{$b_2$}
%(6.5,-.2) node{$b_3$}
%(7.9,.6) node{$b_4$}
%(7.9,-.6) node{$b_5$}
%;
%
%\end{tikzpicture}
%	%\includegraphics[height=2in]{G1G2.eps}
%	\caption{The modified components $G_1$ and $G_2$.}
%\end{figure}

\begin{proof}
Suppose that $G$ contains a cut-edge $uv$ with $d(u) = d(v) = 3$.
There are exactly two components in $G - uv$, call them $G_1$ and $G_2$, with $u \in V(G_1)$ and $v \in V(G_2)$.
Without loss of generality, $p \in V(G_1)$.
For each $i \in \{1,2\}$, let $G_i' = G_i + uv$.

Since $d(v) = 3$, $n(G_1') < n(G)$. 
Thus there is a strong edge-coloring of $G_1'$ using the 5-list-assignment $L$.
Next, color the other two edges incident to $v$ using colors distinct from those on the edges incident to $u$.
Now, $G_2'$ is a subcubic planar graph of girth at least 41 with distinctly colored edges about the vertex $v$ and $n(G_2') < n(G)$.
Thus, there is an extension of the coloring to $G_2'$.

The colorings of $G_1'$ and $G_2'$ form a strong edge coloring of $G$, a contradiction.
\end{proof}

Define a \emph{$k$-caterpillar} to be a $k$-thread $v_1,\dots,v_k$ in $G$ where $p \notin \{v_1,\dots,v_k\}$.
%Define a \emph{$k$-caterpillar} to be the graph $C$ given by a path on vertices $\{v_0,\ldots,v_{k+1}\}$ such that for each $i \in \{1,\dots,k\}$ the vertex $v_i$ has a distinct neighbor $v_i'$ with degree 1, for each $i \in \{0,k+1\}$ the vertex $v_i$ has two distinct neighbors $v_i'$ and $u_i'$ with degree 1, and let $D=\{v_1,\ldots,v_k,v_1',\ldots,v_k'\}$. 
Figure~\ref{8cat} is an $8$-caterpillar.

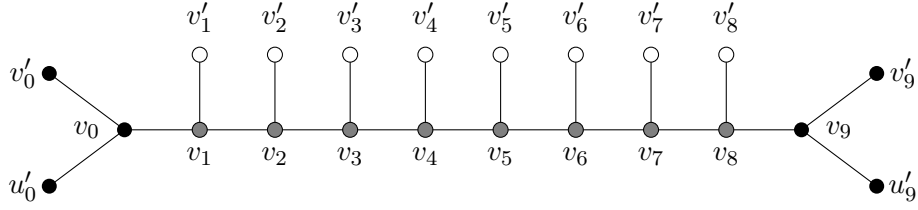
\begin{figure}[h]

	\centering
	\begin{tikzpicture}[scale=1]

\draw (0,0)--(9,0)
(0,0) node[insep]{}
(9,0) node[insep]{};

\foreach \x in {1,2,...,8}{
\draw (\x,0)--(\x,1)
(\x,0) node[free]{}
(\x,1) node[extra]{};
\node at (\x,-0.35) {$v_{\x}$};
\node at (\x,1.5) {$v_{\x}'$};
};

\draw (9,0)--(10,.75)
(9,0)--(10,-.75)
(0,0)--(-1,.75)
(0,0)--(-1,-.75);

\draw (-1,-.75) node[insep]{};
\draw (-1,.75) node[insep]{};
\draw (10,-.75) node[insep]{};
\draw (10,.75) node[insep]{};
\node at (-0.5,0) {$v_{0}$};
\node at (9.5,0) {$v_{9}$};
\node at (10.35,.75) {$v_9'$};
\node at (10.35,-.75) {$u_9'$};
\node at (-1.35,.75) {$v_0'$};
\node at (-1.35,-.75) {$u_0'$};
\end{tikzpicture}

	\caption{An 8-caterpillar.}
	\label{8cat}
\end{figure}

\begin{lem}\label{lma:caterpillar}
$G$ does not contain an $8$-caterpillar.
\end{lem}

%\begin{lem}
%\label{lem:8cat}
%The 8-caterpillar is a reducible configuration.
%\end{lem}

\begin{proof} 
%Suppose $G$ is a vertex minimal $\{1,3\}$-regular planar graph with $\girth(G) \geq 41$ and $\scil > 5$ containing an 8-caterpillar. %Add phantom edges in for each 2-vertex in the 8-caterpillar, so that each vertex in the 8-caterpillar becomes a 3-vertex. A strong edge-coloring of the modified graph produces a strong edge-coloring of the original graph by removing the phantom edges. 
We will show that if $G-p$ contains an 8-caterpillar, then $G$ has a strong edge $L$-coloring.
If $v_1,\dots,v_8$ form an 8-caterpillar, then let $v_i'$ be the 1-vertex adjacent to $v_i$, $v_0$ and $v_9$ be the other neighbors of $v_1$ and $v_8$. 
For $i \in \{0,9\}$, let $v_i'$ and $u_i'$ be the neighbors of $v_i$ other than $v_1$ or $v_8$.
% by a path on vertices $\{v_0,\ldots,v_{k+1}\}$ such that for each $i \in \{1,\dots,k\}$ the vertex $v_i$ has a distinct neighbor $v_i'$ with degree 1, for each $i \in \{0,k+1\}$ the vertex $v_i$ has two distinct neighbors $v_i'$ and $u_i'$ with degree 1, and let $D=\{v_1,\ldots,v_k,v_1',\ldots,v_k'\}$. 

By removing all edges incident to $v_2,\dots, v_7$ and $u_1,\dots,u_8$, as well as any isolated vertices that are produced, we obtain a graph $G'$ with fewer vertices than $G$, so we can strongly edge-color $G'$ with 5 colors. 
We fix such a coloring of $G'$ and generate a contradiction by extending this coloring to a strong edge-coloring of $G$. 
Suppose that $c_1, \dots, c_6$ are the colors of the edges incident to the vertices $v_0$ and $v_9$, and assign variables $y_1, \dots, y_8$ to the pendant edges, and variables $x_1, \dots, x_7$ to the interior edges as shown in Figure \ref{fig:lcat}.

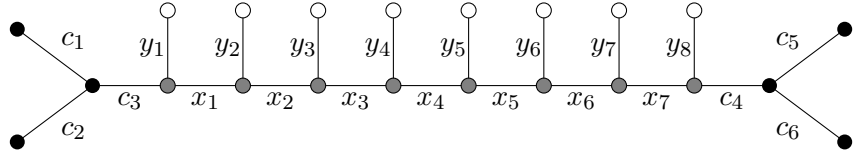
\begin{figure}[h]
	\centering
		\begin{tikzpicture}[scale=1]

\draw (0,0)--(9,0)
(0,0) node[insep]{}
(9,0) node[insep]{};

\foreach \x in {1,2,...,8}{
\draw (\x,0)--(\x,1)
(\x,0) node[free]{}
(\x,1) node[extra]{};
};

\draw (9,0)--(10,.75)
(9,0)--(10,-.75)
(0,0)--(-1,.75)
(0,0)--(-1,-.75);

\foreach \x in {1,2,...,8}{
\draw (\x-.2,.5) node{$y_{\x}$};
}

\foreach \x in {1,2,...,7}{
\draw (\x+.5, -.2) node{$x_{\x}$}; 
}

\draw
(.5,-.2) node{$c_{3}$}
(8.5,-.2) node{$c_{4}$}
(-.25,.6) node{$c_1$}
(-.25,-.6) node{$c_2$}
(9.25,.6) node{$c_5$}
(9.25,-.6) node{$c_6$}
;

\draw (-1,-.75) node[insep]{};
\draw (-1,.75) node[insep]{};
\draw (10,-.75) node[insep]{};
\draw (10,.75) node[insep]{};

\end{tikzpicture}
	\caption{The assignment of colors and variables to the 8-caterpillar.}
\label{fig:lcat}
\end{figure}

Identifying the conflicts between variables and colors produces the following polynomial,
\begin{align*}
f(y_1, \dots, y_8,x_1,\dots, x_7) &= (y_2 - c_3)(x_2 - c_3)(y_7 - c_4)(x_6 - c_4) \\
&\quad\cdot 
\prod_{i=1}^3(x_1 - c_i)\prod_{i=1}^3(y_1 - c_i) \prod_{i=4}^6 (x_7 - c_i) \prod_{i=4}^6 (y_8 - c_i)\\ 
&\quad\cdot \prod_{j -i  \in \{1,2\}}(x_i-x_j)  \prod_{j-i = 1}(y_i - y_j)\prod_{i - j \in \{-1,0,1,2\}} (y_i - x_j). 
\end{align*}

We will use the Combinatorial Nullstellensatz to show that there is an assignment of colors $\hat{c}_1,\dots, \hat{c}_8$ and $c_1',\dots, c_7'$ such that $f(\hat{c}_1,\dots,\hat{c}_8,c_1',\dots,c_7')\ne 0$.  Such an assignment of colors would extend the inductive coloring of $G-p$ to a strong edge-coloring of $G$. If the coefficient of \[(x_{1}~ x_{2}~ x_{3}~ x_{4} ~x_{5} ~x_{6} ~x_{7} ~y_{1} ~y_{2} ~y_{3} ~y_{4} ~y_{5} ~y_{6} ~y_{7} ~y_{8})^4\] is nonzero, then there are values from $L$ for $x_1,\ldots,x_7,y_1,\ldots,y_8$ such that $f$ is nonzero by Theorem~\ref{CN}. 
Using the Magma algebra system~\cite{magma}, this monomial has coefficient $-2$, and thus there is a strong edge-coloring using the 5-list assignment\footnote{All source code and data is available at \url{http://www.math.iastate.edu/dstolee/r/scindex.htm}.}.  %To do this we show there exists such an assignment of colors for which the value of $f$, when evaluated at that assignment, is nonzero. The existence will follow from an application of the Combinatorial Nullstellensatz as stated in Theorem \ref{CN}. The monomial we are interested in showing has a nonzero coefficient is $$(x_{1}~ x_{2}~ x_{3}~ x_{4} ~x_{5} ~x_{6} ~x_{7} ~y_{1} ~y_{2} ~y_{3} ~y_{4} ~y_{5} ~y_{6} ~y_{7} ~y_{8})^4.$$ Using the Magma algebra system we were able to determine that the coefficient on the monomial is $-2$, hence there exists an assignment of colors to the variables that produces a strong edge-coloring. 
Thus, the 8-caterpillar does not exist in a vertex minimal counterexample. %, and so is a reducible configuration, by Combinatorial Nullstellensatz.
\end{proof}

Note that the proof in Lemma~\ref{lma:caterpillar} cannot be extended to exclude a 7-caterpillar in $G$, as there exists a 5-coloring of the external edges that does not extend to the caterpillar, even when the lists are all the same.

To complete the proof, we apply a discharging argument to $\ct(G)$.  \footnote{Our discharging approach is similar to the proof of Lemma~\ref{lma:nrs} where $\ell = 8$, but some care is needed due to the precolored vertex $p$.}. %Let $H$ be the plane graph given by deleting all 1--vertices from $G$.
First, observe that by Lemma~\ref{lma:cutedge}, $\ct(G)$ is 2-connected and so every face is a simple cycle of length at least 41.
Also observe that by Lemma~\ref{lma:caterpillar}, $\ct(G)$ does not contain a path of length 8 where every vertex is of degree 2, unless one of those vertices is $p$.

Assign charge $2d(v)-6$ to every vertex $v \neq p$, charge $\ell(f) - 6$ to every face $f$, and charge $2d(p)+5$ to $p$. 
By Proposition~\ref{prop:sum}, the total amount of charge on $\ct(G)$ is $-1$. 
Apply the following discharging rules.

\begin{enumerate}[(R1)]
    \item For every $v\in G-p$, if $v$ is a $2$--vertex, $v$ pulls charge 1 from each incident face.
    
    \item If $p$ is a $2$--vertex, then $p$ gives charge $\frac{9}{2}$ to each incident face.
\end{enumerate}

Observe that every vertex has nonnegative charge after this discharging process.
It remains to show that every face has nonnegative charge.

Let $f$ be a face, and let $r_2$ be the number of 2--vertices on the boundary of $f$, not counting $p$, and consider two cases.

\textbf{Case 1}: $d(p)=3$ or $p$ is not adjacent to $f$.

In this case, $p$ does not give charge to $f$, and therefore $f$ has charge $\ell(f) - r_2 - 6$ after discharging. 
Also, the boundary of $f$ does not contain a path of length 8 containing only vertices of degree 2, thus $r_2 \leq \left\lfloor \frac{7}{8}\ell(f)\right\rfloor$. 
Since $\ell(f) \geq 41$, we have
\[
	\ell(f) - r_2 - 6 \geq \ell(f) - \left\lfloor \frac{7}{8}\ell(f)\right\rfloor - 6 \geq 0.
\]

\textbf{Case 2}: $d(p)=2$ and $p$ is adjacent to $f$.

By (R2), $p$ gives charge $\frac{9}{2}$ to $f$, so that $f$ has charge $\ell(f)  - r_2 - \frac{3}{2}$ after discharging.
The boundary of $f$ does not contain a path of length 8 containing only vertices of degree 2, except when using $p$, so, $r_2 \leq \left\lfloor \frac{7}{8}\ell(f)\right\rfloor$.
Since $\ell(f) \geq 41$, we have
\[
	\ell(f) - r_2 - \frac{3}{2} \geq \ell(f) - \left\lfloor \frac{7}{8}\ell(f)\right\rfloor - \frac{3}{2} \geq 0.
\]
Thus, all vertices and faces have nonnegative charge, contradicting Proposition~\ref{prop:sum}.
\end{proof}

The second item of Theorem~\ref{thm:mainlist} follows by similarly strengthening the statement to include a precolored vertex and using Lemmas~\ref{lma:cw}, \ref{lma:cutedge}, and \ref{lma:caterpillar}.

%%Borodin and Ivanova~\cite{BI} asked for sparsity conditions that imply $\scil \leq 2d-1$.
%We demonstrated that the conditions of Theorem~\ref{thm:bi} suffice for list coloring, and we believe the conditions in Theorem~\ref{thm:wz} also suffice for list coloring.
%The proof above would extend immediately to imply these facts if the following conjecture is true.
%We note here that if the $(2d-2)$-caterpillar with maximum degree $d$ is list reducible for lists of size $2d-1$ similar to how the 8-caterpillar is list reducible for lists of size 5, then the bounds in Theorem~\ref{thm:wz} imply that the 

\section{Strong Edge-Coloring of Sparse Graphs}\label{sec:sparse}

In this section, we prove Theorems~\ref{thm:sparse} and \ref{thm:sparse4}.

Let $G$ be a graph with maximum degree $\Delta(G) \leq d$. 
For a vertex $v$ in $\ct(G)$ denote by $N_3(v)$ the set of $3^+$-vertices $u$ where $\ct(G)$ contains a path $P$ from $u$ to $v$ where all internal vertices of $P$ are $2\bla$-vertices.  
For $u \in N_3(v)$, let $\mu(v,u)$ be the number of paths from $v$ to $u$ whose internal vertices have degree 2 in $\ct(G)$. 
For a 3-vertex $v$, let the \emph{responsibility set}, denoted $\Resp(v)$, be the set of $2\bla$-vertices that appear on the paths between $v$ and the vertices in $N_3(v)$.

Let $D$ be a subgraph of $G$. We call $D$ a \emph{$k$-reducible configuration} if there exists a subgraph $D'$ of $D$ such that
any  strong $k$-edge-coloring of $G-D'$ can be extended to a strong $k$-edge-coloring of $G$.
One necessary property for the selection of $D'$ is that no two edges that remain in $G-D'$ can have distance at most two in $G$ but distance strictly larger than two in $G - D'$.
In the next subsection we describe several reducible configurations. 
%In all cases, the reducibility was verified using computer by enumerating possible colorings.

\subsection{Reducible Configurations}

This subsection contains description of four types of reducible configurations. 
Each configuration is described in terms of how it appears within $\ct(G)$ where $G$ is a graph with maximum degree $\Delta(G) \leq d$ for some $d \geq 4$.

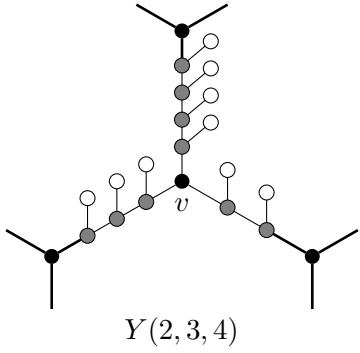
\begin{figure}[p]
\centering
{\usetikzlibrary[topaths]

\tikzset{
insep/.style={inner sep=2pt, outer sep=0pt, circle,fill}, 
free/.style={inner sep=2pt, outer sep=0pt, circle,fill=gray,draw}, 
extra/.style={inner sep=2pt, outer sep=0pt, circle,fill=white,draw}, 
}

%%%%%%%%%%%%%%%%%%% 

\begin{tikzpicture}[scale=1]
\draw
(0,0) node[insep,label=below:{$v$}](a){} 
(a)++(90:2)  node[insep](b){} 
(a)++(210:2)  node[insep](c){} 
(a)++(90-120:2)  node[insep](d){} 
(a) to node[free,pos=0.2](x1){} node[free,pos=0.40](x2){} node[free,pos=0.6](x3){} node[free,pos=0.8](x4){}  (b)
(a) to  node[free,pos=0.25](x5){} node[free,pos=0.50](x6){} node[free,pos=0.75](x7){}   (c)
(a) to  node[free,pos=0.333](x8){} node[free,pos=0.666](x9){}    (d)
;
\draw[line width=1pt]
(b) -- (x4)
(b) -- ++(150:0.7)
(b) -- ++(30:0.7)
(c) -- (x7)
(c) -- ++(270:0.7)
(c) -- ++(150:0.7)
(d) -- (x9)
(d) -- ++(30:0.7)
(d) -- ++(270:0.7)
;
\draw
\foreach \x in {5,6,...,9} { (x\x) -- ++(0,0.5) node[extra]{} }
\foreach \x in {1,2,...,4} { (x\x) -- ++(40:0.5) node[extra]{} }
;
\draw(0,0) ++(0,-2) node{$Y(2,3,4)$};
\end{tikzpicture}

%%%%%%%%%%%%%%%%%%% 

}
\caption{The configuration $Y(t_1,t_2,t_3)$.}\label{fig:Y}
\end{figure}

\begin{figure}[p]
\centering
{\usetikzlibrary[topaths]

\tikzset{
insep/.style={inner sep=2pt, outer sep=0pt, circle,fill}, 
free/.style={inner sep=2pt, outer sep=0pt, circle,fill=gray,draw}, 
extra/.style={inner sep=2pt, outer sep=0pt, circle,fill=white,draw}, 
}

\begin{tikzpicture}[scale=1]
\draw
(-2,0) node[insep,label=left:{$v$}](a){} 
(2,0) node[insep,label=right:{$u$}](b){}
(a) to node[free,pos=0.2](x1){} node[free,pos=0.40](x2){} node[free,pos=0.6](x3){} node[free,pos=0.8](x4){}  (b)
\foreach \x in {1,2,...,4}{(x\x) -- ++(0,0.5) node[extra]{}}

(a)++(0,2)  node[insep](c){} 
(a)++(0,-2)  node[insep](d){} 
(a) to  node[free,pos=0.333](x4){} node[free,pos=0.666](x1){}    (c)
(a) to  node[free,pos=0.333](x3){} node[free,pos=0.666](x2){}    (d)
\foreach \x in {1,2,...,4}{(x\x) -- ++(-0.5,0) node[extra]{}}
;
\draw[line width=1pt]
(c) -- (x1)
(c) -- ++(150:0.7)
(c) -- ++(30:0.7)
(d) -- (x2)
(d) -- ++(210:0.7)
(d) -- ++(330:0.7)
;

\draw
(b)++(0,2)  node[insep](c){} 
(b)++(0,-2)  node[insep](d){} 
(b) to  node[free,pos=0.25](x5){} node[free,pos=0.50](x3){} node[free,pos=0.75](x1){}   (c)
(b) to  node[free,pos=0.25](x6){} node[free,pos=0.50](x4){} node[free,pos=0.75](x2){}   (d)
\foreach \x in {1,2,...,6}{(x\x) -- ++(0.5,0) node[extra]{}}
;
\draw[line width=1pt]
(c) -- (x1)
(c) -- ++(150:0.7)
(c) -- ++(30:0.7)
(d) -- (x2)
(d) -- ++(210:0.7)
(d) -- ++(330:0.7)
;

\draw(0,0) ++(0,-2) node{$H(2,2;4;3,3)$};
\end{tikzpicture}

%%%%%%%%%%%%%%%%%%% 

}
\caption{The configuration \label{fig:H}$H(t_1,t_2;r;s_1,s_2)$.}
\end{figure}
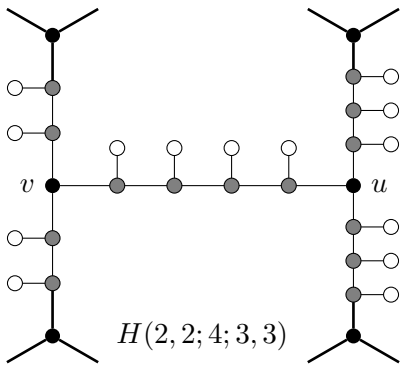

\begin{figure}[p]
\centering
{\usetikzlibrary[topaths]

\tikzset{
insep/.style={inner sep=2pt, outer sep=0pt, circle,fill}, 
free/.style={inner sep=2pt, outer sep=0pt, circle,fill=gray,draw}, 
extra/.style={inner sep=2pt, outer sep=0pt, circle,fill=white,draw}, 
}

\begin{tikzpicture}[scale=1]
\draw
(-2,0) node[insep,label=below:{$v$}](a){} 
(2,0) node[insep,label=below:{$u$}](b){}
(a)++(-3,0)  node[insep](c){} 
(b)++(3,0)  node[insep](d){} 
(a) to[bend left=50] node[free,pos=0.2](x1){} node[free,pos=0.40](x2){} node[free,pos=0.6](x3){} node[free,pos=0.8](x4){}  (b)
(a) to[bend right=50] node[free,pos=0.2](x5){} node[free,pos=0.4](x6){} node[free,pos=0.6](x7){}  node[free,pos=0.8](x8){}  (b)
(a) to  node[free,pos=0.25](x9){} node[free,pos=0.50](x10){} node[free,pos=0.75](x11){}   (c)
(b) to  node[free,pos=0.333](x12){} node[free,pos=0.666](x13){}    (d)
;
\draw[line width=1pt]
(c) -- (x11)
(c) -- ++(120:0.7)
(c) -- ++(240:0.7)
(d) -- (x13)
(d) -- ++(60:0.7)
(d) -- ++(-60:0.7)
;
\draw
\foreach \x in {1,2,...,13}
{
(x\x) -- ++(0,0.5) node[extra]{}
}
;
\draw(0,0) ++(0,-2) node{$\Phi(3,4,4,2)$};
\end{tikzpicture}

}
\caption{The configuration \label{fig:Phi}$\Phi(t,a_1,a_2,s)$.}
\end{figure}
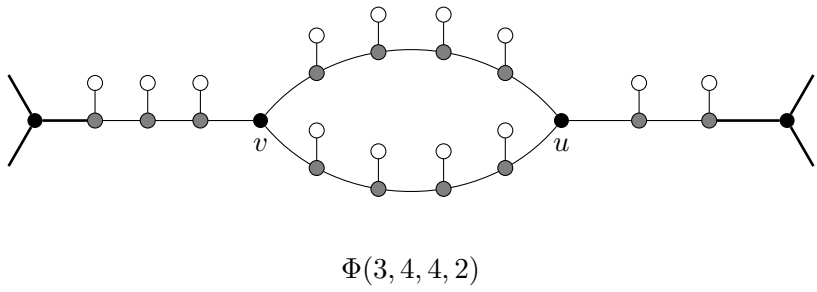

Let $t$ be a positive integer.
The \emph{$t$-caterpillar} is formed by two $3^+$-vertices $v_0$ and $v_{t+1}$ with a path $v_0v_1\dots v_tv_{t+1}$ where each $v_i$ is a $2\bla$-vertex for every $i \in \{1,\dots,t\}$.

Let $t_1,\dots, t_k$ be nonnegative integers.
A configuration $Y(t_1,\dots,t_k)$ is formed by a $k^+$-vertex $v$ and $k$ internally disjoint paths of lengths $t_1+1,\dots,t_k+1$ with $v$ as a common endpoint, where the internal vertices of the paths are $2\bla$-vertices.
We call such configuration a \emph{$Y$-type configuration about $v$}, see Figure~\ref{fig:Y}.

A configuration $H(t_1,t_2;r;s_1,s_2)$ is formed by two 3-vertices $u$ and $v$
and 5 internally disjoint paths of lengths   $t_1+1$, $t_2+1$, $r+1$, $s_1+1$, and $s_2+1$, 
where the internal vertices of the paths are $2\bla$-vertices. The paths of lengths
$t_1+1$ and $t_2+1$ have  $v$ as an endpoint, the path of length $r+1$ has  $u$ and $v$ as endpoints 
and the paths of lengths $s_1+1$ and $s_2+1$ have $u$ as an endpoint.
We call such configuration an \emph{$H$-type configuration about $v$ and $u$}, see Figure~\ref{fig:H}.

A configuration $\Phi(t,a_1,a_2,s)$ is formed by two 3-vertices $u$ and $v$
and 4 internally disjoint paths of lengths   $t+1$, $a_1+1$, $a_2+1$, and $s+1$, 
where the internal vertices of the paths are $2\bla$-vertices.
The path of length $t+1$ has $v$ as an endpoint, the paths of lengths $a_1+1$ and $a_2+1$ have $u$ and $v$ as endpoints
and the path of length $s+1$ has $u$ as an endpoint.
We call such configuration a \emph{$\Phi$-type configuration about $v$ and $u$}, see Figure~\ref{fig:Phi}.

The reducibility of these configurations was verified using computer\footnote{All source code and data is available at \url{http://www.math.iastate.edu/dstolee/r/scindex.htm}.}, and in addition the 8-caterpillar is addressed in Lemma~\ref{lma:caterpillar}.  
Given the definition of a $2\bla$-vertex, the vertices of degree two in these configurations may, or may not, be adjacent to some 1-vertices in $G$.  
We demonstrate the reducibility of the instances of these configurations wherein each vertex of degree 2 is adjacent to $d-2$ 1-vertices, as depicted in Figures \ref{fig:Y}--\ref{fig:Phi}.  
This suffices to address all other instances of these configurations that may occur.  

\begin{clm}\label{claim:red_cat}
The following caterpillars with maximum degree $d$ are reducible:
\begin{enumerate}
\item (Borodin and Ivanova~\cite{BI}) For $d = 3$, the $8$-caterpillar is 5-reducible.
\item (Wang and Zhao~\cite{WZ}) For $d \geq 4$, the  $(2d-2)$-caterpillar is $(2d-1)$-reducible.
\end{enumerate}
\end{clm}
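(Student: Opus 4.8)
The plan is to handle both parts by the reduction-plus-extension method already used for Lemma~\ref{lma:caterpillar}. Given a minimal counterexample $G$ (to whichever theorem invokes the claim) containing a $t$-caterpillar $v_0v_1\cdots v_tv_{t+1}$, with $t=8$ when $d=3$ and $t=2d-2$ in general, I take the reducible subgraph $D'$ to consist of the interior path edges $v_1v_2,\dots,v_{t-1}v_t$ together with all $d-2$ pendant edges at each interior vertex $v_1,\dots,v_t$, while retaining the two end edges $v_0v_1$ and $v_tv_{t+1}$ and all edges incident to the $3^+$-vertices $v_0,v_{t+1}$. Deleting $D'$ strictly reduces the number of vertices, so by minimality $G-D'$ has a strong $(2d-1)$-edge-coloring; because the removed edges form one contiguous interior block, no pair of surviving edges that lie at distance at most two in $G$ is separated to distance greater than two in $G-D'$, so $D'$ meets the structural requirement for a reducible configuration. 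What remains is to extend the coloring across the caterpillar.

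For the extension I would introduce one variable per deleted edge---path variables $x_1,\dots,x_{t-1}$ and, for each interior vertex, its pendant variables---and form the polynomial $f$ obtained by multiplying $(z-z')$ over every pair of variables whose edges lie at distance at most two in $G$ and $(z-c)$ over every variable $z$ whose edge lies at distance at most two from a colored edge of color $c$. A nonvanishing point of $f$ over color sets (or lists) of size $k=2d-1$ is exactly a valid extension, so by the Combinatorial Nullstellensatz (Theorem~\ref{CN}) it suffices to exhibit a top-degree monomial of $f$ in which every exponent is at most $k-1=2d-2$ with nonzero coefficient; the natural candidate is the monomial in which every variable appears to the power $2d-2$. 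For part~(1) this is exactly Lemma~\ref{lma:caterpillar}: with $d=3$, $t=8$, and every variable raised to the fourth power, the coefficient equals $-2$, so part~(1) requires nothing beyond that lemma.

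The crux is part~(2), and the reason it does not reduce to a single computation is that it is an infinite family indexed by $d$. Moreover, a naive greedy sweep cannot finish: at the right end the edge $v_{t-1}v_t$ already sees its two neighboring interior vertices, the precolored edge $v_tv_{t+1}$, and all of the colored edges at the $3^+$-vertex $v_{t+1}$, for a total well above $k-1$ colored constraints, so a free color need not exist locally. The genuine content is that the length-$(2d-2)$ path accumulates just enough algebraic freedom to absorb this overload---indeed, the length is calibrated so that the number of conflict factors equals $(k-1)$ times the number of variables, forcing the all-$(2d-2)$ monomial to be the unique top-degree candidate---and proving the corresponding coefficient is nonzero \emph{uniformly in $d$}, rather than for one value at a time, is the main obstacle. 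I expect the cleanest route is to establish this nonvanishing by a recursion on $d$ (equivalently on the caterpillar length), isolating how each added $2\bla$-vertex with its $d-2$ pendants transforms the relevant coefficient and checking that the recursion never produces zero; failing a clean recursion, one falls back on the direct extension argument of Wang and Zhao~\cite{WZ}, which supplies the uniform certificate for the $(2d-2)$-caterpillar. Either way, the delicate point is the interaction at the two crowded ends near $v_1$ and $v_t$, where the count is tightest and where the precise length $2d-2$ is forced.
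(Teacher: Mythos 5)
Neither part of this claim is actually proved in the paper: both are quoted results (part~(1) from Borodin and Ivanova~\cite{BI}, part~(2) from Wang and Zhao~\cite{WZ}), and the paper's only added remark is that part~(1) is independently recovered by Lemma~\ref{lma:caterpillar}. Your treatment of part~(1) --- delete the interior path edges and the pendants at $v_1,\dots,v_8$, keep the two end edges, and invoke the Magma-computed coefficient $-2$ of the monomial with all exponents equal to $4$ --- is precisely that remark, so part~(1) is fine and matches the paper.

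Part~(2) is where your proposal has a genuine error. Your plan rests on the assertion that for the $(2d-2)$-caterpillar the number of conflict factors equals $(k-1)$ times the number of variables (with $k=2d-1$), so that the monomial in which every variable appears to the power $2d-2$ is the forced top-degree candidate. That calibration is special to the case $(d,t)=(3,8)$ and is false for every $d\ge 4$. Indeed, with $t=2d-2$ there are $n=(t-1)+t(d-2)=2d^2-4d+1$ variables, and the conflict factors (variable--variable together with variable--colored-edge, taking $v_0$ and $v_{t+1}$ of full degree $d$) number
\[
(2t-5)\;+\;\Bigl[\,t\tbinom{d-2}{2}+(t-1)(d-2)^2\Bigr]\;+\;(4t-6)(d-2)\;+\;2(d^2-1)\;=\;3d^3-7d^2+5d-1,
\]
whereas $(k-1)n=(2d-2)(2d^2-4d+1)=4d^3-12d^2+10d-2$. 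The deficit is $(d-1)(d^2-4d+1)$, which is strictly positive for all $d\ge 4$ (it equals $3$ at $d=4$); only at $(d,t)=(3,8)$ does one get equality, namely $60=4\cdot 15$, which is exactly why Lemma~\ref{lma:caterpillar} can use the all-fourth-power monomial. Consequently $\deg f<(2d-2)\,n$ for $d\ge 4$, so the all-$(2d-2)$ monomial has degree \emph{exceeding} $\deg f$, its coefficient is trivially zero, and it cannot serve as the Nullstellensatz certificate; your uniqueness argument collapses, no alternative top-degree monomial (which would necessarily have some exponents below $2d-2$) is exhibited, and the proposed recursion on $d$ is never carried out. What survives of your part~(2) is only the fallback citation to Wang and Zhao --- which coincides with what the paper itself does, but then the polynomial scaffolding you build around it contributes nothing.
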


These caterpillars are likely the smallest that are reducible for each degree $d$.
Thus, the bounds in Theorems~\ref{thm:bi} and \ref{thm:wz} are best possible using only Lemma~\ref{lma:nrs}.
To improve these bounds, we demonstrate larger reducible configurations and use a more complicated discharging argument.

\begin{clm}\label{clm:Reducible}
The following configurations with maximum degree 3 are 5-reducible:
\begin{enumerate}
\item $Y(1,6,7)$, $Y(2,5,6)$ and $Y(3,4,5)$.
\item $H(7,7;0;3,7),\, H(7,7;0;4,6),\, H(7,7;0;5,5),\, H(6,7;0;3,7),\, H(6,7;0;4,6),\\
        H(6,7;0;5,5),\, H(6,6;1;2,7),\, H(6,6;1;3,6),\, H(6,6;1;4,5),\, H(5,7;1;2,7),\\
        H(5,7;1;3,6),\, H(5,7;1;4,5),\, H(4,7;2;1,7),\, H(4,7;2;2,6),\, H(4,7;2;3,5),\\
        H(4,7;2;4,4),\, H(3,7;3;1,6),\, H(3,7;3;2,5) \text{ and }  H(3,7;3;3,4).$
\item   $\Phi(7,0,7,1),\, \Phi(7,0,6,1),\, \Phi(6,0,7,1),\, \Phi(6,1,6,1),\, \Phi(7,1,5,1), \Phi(5,1,7,1),\\
         \Phi(7,2,4,1),\, \Phi(4,2,7,1),\, \Phi(7,3,3,1),\, \Phi(3,3,7,1) \text{ and }\Phi(3,7,0,7).$
\end{enumerate}
\end{clm}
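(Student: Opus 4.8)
The plan is to establish the reducibility of each listed configuration by the same Combinatorial-Nullstellensatz technique already used in Lemma~\ref{lma:caterpillar}, packaged uniformly so that a single computation handles each case. For a configuration $D$ (one of the $Y$, $H$, or $\Phi$ types with $d=3$), I would first specify the subgraph $D'\subseteq D$ whose removal leaves enough room to extend: concretely, $D'$ consists of all edges incident to the interior $2^\perp$-vertices of the threads together with their pendant edges, while the two (or one) $3^+$-endpoints $v,u$ and their edges leading \emph{out} of the configuration remain in $G-D'$. The key structural point to check is the necessary condition stated in the definition of a $k$-reducible configuration: no two edges surviving in $G-D'$ may lie at distance $\le 2$ in $G$ but distance $>2$ in $G-D'$. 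Because every interior vertex of each thread is a genuine $2^\perp$-vertex and the threads are internally disjoint, the only surviving edges near the configuration are those at the endpoints $v$ and $u$, and the thread lengths in each listed tuple are chosen precisely so that distinct surviving edges do not collide across a deleted thread; verifying this separation is the one genuinely structural step and must be done once per configuration-type rather than per instance.

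Granting the correct choice of $D'$, I would then extend an arbitrary strong $5$-edge-coloring of $G-D'$ by assigning a variable to each deleted edge (interior thread-edges and pendant edges) and treating the colors already present on the surviving edges at $v,u$ (and at the far endpoints of each thread) as fixed constants $c_1,c_2,\dots$. Exactly as in Lemma~\ref{lma:caterpillar}, I would form the polynomial $f$ that is the product of all difference factors $(\alpha-\beta)$ over pairs $(\alpha,\beta)$ of edges forced to differ: that is, one factor for each pair of deleted edges at distance $\le 2$ in $G$, and one factor $(\text{variable}-c_j)$ for each deleted edge within distance two of a fixed surviving edge. A strong $5$-edge-coloring extending the given one exists from lists of size $5$ precisely when $f$ has a nonzero evaluation on a product of $5$-element sets; by the Combinatorial Nullstellensatz (Theorem~\ref{CN}) it suffices to exhibit a single monomial $\prod x_i^{t_i}$ with $\sum t_i=\deg f$ and nonzero coefficient, where each individual degree $t_i$ is at most $4$ (so that $t_i+1\le 5$ values suffice per variable).

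The heart of the argument is therefore the coefficient computation, which I would delegate to the Magma algebra system exactly as the paper does for the $8$-caterpillar, reporting for each configuration a specific monomial and its (nonzero) coefficient. I would organize the verification so that the three families share a common code path: a routine that, given the thread-length tuple, builds the distance-$\le 2$ conflict graph on the deleted edges plus boundary constraints, writes down $f$, and extracts the target coefficient. The listed tuples are not arbitrary; for each type they enumerate the boundary cases of a feasibility region (for instance the $Y$ cases $(1,6,7),(2,5,6),(3,4,5)$ all have total interior length $14$, and the $H$ and $\Phi$ lists similarly fix the relevant sums), so I expect that once the extremal members are checked, every configuration of the same type with larger thread lengths reduces to a checked one by the standard observation that lengthening a thread only relaxes constraints. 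I would make this monotonicity explicit as a short lemma to avoid an infinite computation.

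The step I expect to be the main obstacle is \emph{not} the Nullstellensatz evaluation itself, which is mechanical, but rather the careful specification of $D'$ and the verification of the distance-preservation condition when $r\ge 1$ in the $H$-type or when a thread has length making its two endpoints land within distance two of each other through a short connecting path. In those cases the two $3$-vertices $u,v$ interact, and an edge surviving at $u$ may be at distance $\le 2$ in $G$ from an edge surviving at $v$ via the short $r$-thread; I must confirm that such pairs either remain at distance $\le 2$ in $G-D'$ (so the inductive coloring already separates them) or are themselves among the variabilized deleted edges. Getting this bookkeeping right for every tuple—so that the conflict polynomial $f$ encodes \emph{exactly} the strong-coloring constraints of $G$ and no spurious or missing factors—is where the real care lies, and I would document the chosen $D'$ and the resulting constraint list for one representative of each family in full before invoking the computer for the remainder.
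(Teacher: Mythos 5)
Your proposal takes essentially the same approach as the paper: the paper establishes this claim purely by computer verification (a footnote points to the source code, at the same site as the Magma computation in Lemma~\ref{lma:caterpillar}), i.e., precisely the conflict-polynomial/Combinatorial Nullstellensatz technique you describe, applied configuration by configuration with the depicted instances (all pendant edges present, $3$-vertex thread endpoints) serving as the worst case. Two minor slips that do not affect correctness: the listed $Y$-tuples have interior totals $14$, $13$, $12$ (not all $14$), and no monotonicity lemma is needed for the claim itself, since it asserts reducibility only of the finitely many listed configurations --- containment of shorter-thread configurations inside longer ones matters only later, when the claim is applied in the discharging argument.
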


\begin{clm}\label{clm:Reducible4}
The following configurations with maximum degree 4 are $7$-reducible:
\[ Y(2,4,4),\ Y(1,5,5),\ Y(2,4,5),\ Y(3,4,4),\text{ and }Y(2,5,5).
\]
\end{clm}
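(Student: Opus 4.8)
The plan is to verify the $7$-reducibility of each of the five $Y$-type configurations in Claim~\ref{clm:Reducible4} by the same Combinatorial Nullstellensatz approach used for the $8$-caterpillar in Lemma~\ref{lma:caterpillar}, specialized to maximum degree $d=4$ (so we are edge-coloring from lists of size $2d-1 = 7$). Fix a configuration $Y(t_1,t_2,t_3)$ centered at a $3^+$-vertex $v$ with three threads of lengths $t_1+1,t_2+1,t_3+1$, where each internal $2^\perp$-vertex carries $d-2=2$ pendant edges as drawn in Figure~\ref{fig:Y}. Following the reduction template, I would choose the subgraph $D'$ to be the set of edges whose colors we wish to free up: all edges strictly interior to the three threads together with the pendant edges hanging off the interior $2^\perp$-vertices, leaving the three outer $3^+$-vertices (and the edges incident to them but outside the configuration) colored by the minimal-counterexample coloring of $G-D'$. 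The key structural point, which I would check first, is that $D'$ satisfies the necessary condition stated just before the reducible-configuration subsection: no two edges surviving in $G-D'$ are pushed from distance $\le 2$ to distance $>2$. Because girth is large and the thread lengths here are all at least $3$ (the smallest is $t_1=1$, giving a thread of length $2$, but the center $v$ still separates things), this amounts to confirming that the only conflicts among the removed edges are local to the configuration.

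Next I would set up the polynomial. Each removed edge becomes a variable; the boundary colors inherited from $G-D'$ become constants $c_1,c_2,\dots$ (the colors on edges incident to $v$ outside the thread, and the colors incident to the three terminal $3^+$-vertices). For every pair of edges at distance at most two in $G$ that both lie in $D'$, or one in $D'$ and one adjacent boundary edge, I would include a factor $(x-x')$ or $(x-c)$ forcing them to differ, exactly as in the displayed polynomial $f$ for the caterpillar. The degree $t$ of $f$ equals the number of conflict factors, and the target monomial is the product $\prod x_i^{t_i}$ where $t_i$ is the number of distinct colors that variable $x_i$ must avoid, i.e.\ its degree in $f$; since each list has size $7$, the Combinatorial Nullstellensatz (Theorem~\ref{CN}) applies provided $t_i \le 6$ for every variable and the coefficient of $\prod x_i^{t_i}$ is nonzero. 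Verifying that this coefficient is nonzero is precisely the computational step carried out in Magma, and the footnoted source code records the result for each of the five configurations.

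\medskip

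The main obstacle I anticipate is not any single algebraic identity but the bookkeeping of the conflict structure: at the center $v$ of a $Y$-configuration three threads meet, so the first interior edge of each thread is at distance at most two from the first interior edges of the other two threads, creating a dense cluster of cross-thread constraints near $v$ that has no analogue in the single-path caterpillar. One must enumerate these carefully so that the polynomial $f$ is neither missing a required factor (which would give an invalid coloring) nor carrying a spurious one (which would inflate $t_i$ beyond $6$ and break the list-size hypothesis). Once the factors are correctly assembled, the nonvanishing of the relevant coefficient is a finite symbolic computation, and the reducibility of all five configurations $Y(2,4,4)$, $Y(1,5,5)$, $Y(2,4,5)$, $Y(3,4,4)$, $Y(2,5,5)$ follows configuration-by-configuration. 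As noted in the text, it suffices to treat the instance where each $2^\perp$-vertex has exactly $d-2$ pendant edges, since any fewer pendant edges only removes constraints and hence only makes the Nullstellensatz hypothesis easier to satisfy; this reduction handles all occurrences of these configurations in $\ct(G)$.
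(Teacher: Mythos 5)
Your plan delegates the entire content of Claim~\ref{clm:Reducible4} to a Combinatorial Nullstellensatz coefficient computation whose outcome you assert rather than establish, and that assertion is not supported by the paper. The Nullstellensatz is a one-sided test: a nonzero coefficient would prove reducibility (indeed \emph{list} reducibility), but a zero coefficient proves nothing, since Theorem~\ref{CN} gives a sufficient condition only. The paper's Magma/Nullstellensatz computation appears solely in Lemma~\ref{lma:caterpillar}, for the $8$-caterpillar in the list setting; for the $Y$-, $H$-, and $\Phi$-type configurations of Claims~\ref{clm:Reducible} and~\ref{clm:Reducible4} the paper instead runs a reducibility-testing algorithm (in effect, a decisive check that every strong $7$-edge-coloring of $G-D'$ extends) and claims only ordinary, non-list reducibility. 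So your statement that ``the footnoted source code records the result for each of the five configurations'' misattributes to the paper a computation it does not report. Moreover, the paper's own architecture is evidence against the outcome you are assuming: had these configurations been verified by your method, they would be list-reducible, yet the authors claim list reducibility only for the caterpillar and prove the list theorem (Theorem~\ref{thm:mainlist}) only at girth $41$, while Theorems~\ref{thm:sparse} and~\ref{thm:sparse4} at girths $30$ and $28$ are stated strictly for $\sci$. Without actually performing the computation (and with a fallback to a direct extension check if the coefficient vanishes), your proof is incomplete precisely at its essential step.

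Two further points would need repair even if the computation succeeded. First, your monomial bookkeeping is inconsistent: you propose taking $t_i$ equal to the degree of $x_i$ in $f$, but then $\sum_i t_i$ exceeds $\deg f$ whenever $f$ contains any variable--variable factor (each such factor is counted twice), violating the requirement $\sum t_i = \deg f$ in Theorem~\ref{CN}. One must instead choose a monomial of total degree exactly $\deg f$, assigning each conflict factor to one of its two variables so that every exponent stays at most $6$; this is exactly the delicate choice near the center $v$ that you flag, and as set up your target monomial cannot even be formed. Second, your closing argument that instances with fewer pendant edges ``only remove constraints and hence make the Nullstellensatz hypothesis easier to satisfy'' is invalid in this framework: deleting factors produces a different polynomial of lower degree, and nonvanishing of the original coefficient implies nothing about the new one. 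The correct reduction is graph-theoretic: if the instance in $G$ lacks some pendant edges, form $G^{+}$ by adding them, note that $G - D'$ equals $G^{+}$ minus the enlarged deleted set, extend the given strong coloring to $G^{+}$ by reducibility of the full-pendant configuration, and restrict to $G$, using that the restriction of a strong edge-coloring to a subgraph is again strong.
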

    
 \subsection{Proof of Theorem~\ref{thm:sparse}}

 \begin{proof}
Among graphs $G$ with $\mad(G) < 2 + \frac{1}{7}$ not containing $S_3$, $S_4$, or $S_7$, with $\sci > 5$, select $G$ while minimizing the number of vertices in $\ct(G)$.
Note that  $e(G) > 5$ since $\sci > 5$, and let $n$ be the number of vertices in $\ct(G)$. 
By using the discharging method, we will show that $\mad(\ct(G)) \geq 2+\frac{1}{7}$, which is a contradiction, so no such minimal counterexample exists.
        
Observe that $G$ does not contain any of the reducible configurations addressed in Claim \ref{clm:Reducible}.
We also have the following additional structure on $\ct(G)$.
      
\begin{lem}\label{lma:cutedge2}
$\ct(G)$ is 2-connected.
\end{lem}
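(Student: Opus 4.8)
The plan is to show that $\ct(G)$ is connected and has neither a cut-edge nor a cut-vertex; since $\ct(G)$ is subcubic, these facts together force $2$-connectivity.

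First I would establish connectivity. If $G$ were disconnected, then some component would already witness $\sci > 5$ while inheriting the hypotheses $\mad(G) < 2 + \frac{1}{7}$ and the absence of $S_3, S_4, S_7$, and having no more vertices in its contracted graph; this would contradict the minimal choice of $G$, so $G$ is connected. Then $\ct(G)$ is connected as well: any two vertices of degree at least $2$ in $G$ are joined by a shortest path whose internal vertices all have degree at least $2$ (a $1$-vertex can never be internal to a path), and such a path survives in $\ct(G)$. The sparsity together with $e(G) > 5$ lets me dispose of the degenerate cases where $\ct(G)$ has at most two vertices (a star has $\sci \le 3$, and two vertices joined by an edge produce an immediate cut-edge, handled below), so I may assume $n(\ct(G)) \ge 3$.

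The heart of the argument is ruling out a cut-edge of $\ct(G)$, in the spirit of Lemma~\ref{lma:cutedge} but without a precolored vertex. Suppose $ab$ is a cut-edge; since appending pendant $1$-vertices neither creates nor destroys bridges among edges of $\ct(G)$, the edge $ab$ is also a bridge of $G$, and $G - ab$ splits into components $G_a \ni a$ and $G_b \ni b$. Let $H_a = G[V(G_a) \cup \{b\}]$ and $H_b = G[V(G_b) \cup \{a\}]$, so that in each the far endpoint becomes a pendant $1$-vertex. Each is a subgraph of $G$, hence inherits all hypotheses, and its contracted graph loses at least the far endpoint, so $n(\ct(H_a)), n(\ct(H_b)) < n(\ct(G))$; by minimality each admits a strong $5$-edge-coloring $c_a$ and $c_b$. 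The key localization is that, because $ab$ is a bridge, the only pairs of edges lying within distance two in $G$ with one edge in $G_a$ and the other in $G_b$ are those among the at most five edges incident to $a$ or $b$ — any shorter connection would require an edge between $V(G_a)$ and $V(G_b)$ other than $ab$. These (at most) five edges pairwise conflict, so they must receive five distinct colors, which is exactly feasible.

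To assemble a coloring of $G$, I keep $c_a$ on $G_a$ together with $ab$, and recolor $G_b$ by $\pi \circ c_b$ for a suitable permutation $\pi$ of the five colors. Writing $E_a$ and $E_b$ for the (at most two) non-bridge edges at $a$ and at $b$, I would choose $\pi$ so that $\pi(c_b(ab)) = c_a(ab)$ and so that $\pi$ maps the colors on $E_b$ into the two colors not appearing on $\{ab\} \cup E_a$; since $|E_b| \le 2$ and $c_b(ab), c_b(E_b)$ are distinct, such a $\pi$ exists. Because a permutation preserves strong colorings, $\pi \circ c_b$ stays valid on $G_b$, and the choice of $\pi$ renders the five interface edges rainbow, so the combined assignment is a strong $5$-edge-coloring of $G$ — contradicting $\sci(G) > 5$. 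Hence $\ct(G)$ has no cut-edge. Finally, a cut-vertex $w$ of the connected subcubic graph $\ct(G)$ would necessarily have an incident bridge (with $\deg_{\ct(G)}(w) \le 3$ one cannot direct two edges into each of two distinct components of $\ct(G) - w$), which is now impossible; thus $\ct(G)$ has no cut-vertex, and is therefore $2$-connected. I expect the recombination step — verifying that the bridge confines all cross-constraints to the five interface edges, and that the required permutation can always be chosen — to be the main point demanding care.
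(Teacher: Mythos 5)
Your proof is correct, and its engine is the same as the paper's: assume a cut-edge of $\ct(G)$, color the two sides of the split by minimality, and merge the two colorings via a permutation of the five colors. The decomposition, however, genuinely differs. The paper colors $G_1' = G_1 + \{uv, vv_1, vv_2\}$ and $G_2' = G_2 + \{uv, uu_1, uu_2\}$, keeping stub edges into the far side, so that all five interface edges appear in both graphs, pairwise conflict, and hence are rainbow in both colorings; the permutation is then simply the one identifying the two rainbow patterns, and correctness of the merge is immediate. The cost is a degenerate case, $G_1' = G$ (when the far side is just $v$ and its pendant neighbors), where no shrinkage occurs and the paper must argue separately by deleting $v_1, v_2$ and extending greedily. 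Your induced subgraphs $H_a = G[V(G_a)\cup\{b\}]$ and $H_b = G[V(G_b)\cup\{a\}]$ always strictly shrink the contracted graph (the far endpoint becomes a $1$-vertex), so no special case arises; in exchange, each side colors only three of the five interface edges, which forces your finer permutation choice — sending the colors at $b$ into the at least two colors unused on $\{ab\}\cup E_a$, which your counting justifies — and the explicit localization that the only cross-bridge conflicts are between $E_a$ and $E_b$. You also supply two steps the paper leaves implicit: connectivity of $\ct(G)$, and the subcubic pigeonhole argument that excluding cut-edges excludes cut-vertices; the paper's proof literally only rules out cut-edges yet asserts $2$-connectivity, so this is a genuine completion rather than padding. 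One minor wrinkle: your disconnectedness argument needs a tiebreak, since a component witnessing $\sci > 5$ could have exactly as many contracted vertices as $G$ (the other components being $K_1$ or $K_2$ components, which contribute none); minimizing $n(G)$ as a secondary criterion, or discarding such trivial components at the outset, repairs this in one line.
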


\begin{proof}
Suppose that $\ct(G)$ contains a cut-edge $uv$.
In $G$, the vertices $u$ and $v$ have degree at least two.
There are exactly two components, $G_1$ and $G_2$, in $G - uv$, with $u \in V(G_1)$ and $v \in V(G_2)$.
Let $u_1,u_2$ be neighbors of $u$ in $G_1$ and $v_1,v_2$ be neighbors of $v$ in $G_2$; let $u_1 = u_2$ only when $u$ has a unique neighbor in $G_1$, and $v_1 = v_2$ only when $v$ has a unique neighbor in $G_2$.
Let $G_1' = G_1 + \{ uv, vv_1, vv_2\}$ and $G_2' = G_2 + \{ uv, uu_1, uu_2\}$.

If $G_1' = G$, then consider $G' = G - v_1 - v_2$.
Since $n(G') < n(G)$ and $\mad(G') \leq \mad(G)$, there is a strong 5-edge-coloring $c$ of $G'$.
Extend the coloring $c$ to color $c(vv_1)$ and $c(vv_2)$ from the colors not in $\{ c(uv), c(uu_1), c(uu_2)\}$, a contradiction.
We similarly reach a contradiction when $G_2' = G$.

Therefore, $n(G_i') < n(G)$ and $\mad(G_i') \leq \mad(G)$ for each $i \in \{1,2\}$.
Thus, there exist strong 5-edge-colorings $c_1$ and $c_2$ of $G_1'$ and $G_2'$, respectively.
For each coloring, the colors on the edges $uv, uu_1, uu_2, vv_1, vv_2$ are distinct.
Let $\pi$ be a permutation of the five colors satisfying $\pi(c_2(e)) = c_1(e)$ for each edge $e \in \{uv, uu_1, uu_2, vv_1, vv_2\}$.
Then, we extend the coloring $c_1$ of $G_1'$ to all of $G$ by assigning $c_1(e) = \pi(c_2(e))$ for all edges $e \in E(G_2')$.
The coloring $c_1$ is a strong 5-edge-coloring of $G$, a contradiction.
\end{proof}
       
        If $\ct(G)$ does not have any $3$-vertices, then $\ct(G)$ must be isomorphic to cycle $C_n$. 
        If $n \geq 9$, then $\ex_3(G)$ contains an 8-caterpillar.
        If $n \in \{5,6,8\}$, then $G$ is a subgraph of $S_5$, $S_6$, or $S_8$, which each has a strong edge-coloring using five colors, discovered by computer. 
        When $n \in\{3,4,7\}$, $G$ does not contain $S_3$, $S_4$, or $S_7$, and any proper subgraph of these graphs is $5$ strong edge-colorable, discovered by computer.
        Therefore, $\ct(G)$ is not isomorphic to a cycle, and hence for every $2\bla$-vertex $u$ in $G$, $|N_3(u)| \geq 1$.
        
        If $G$ has some vertex $v$ such that $|N_3(v)|=1$, then $G$ must be a subgraph of $\Theta(t_1,t_2,t_3)$, which is the graph consisting of three internally disjoint $x-y$ paths of length $t_1+1, t_2+1$ and $t_3+1$, for some $0 \leq t_1 \leq t_2 \leq t_3$.
 
        If $t_3 \geq 8$, then $\ex_3(G)$ contains an 8-caterpillar, so we assume that $t_3 < 8$. 
        Observe that if $\mad(\Theta(t_1,t_2,t_3)) < 2 + \frac{1}{7}$, then $t_1+t_2+t_3 \geq 13$.
        However, if $\Theta(t_1,t_2,t_3)$ does not contain a reducible $Y$-type configuration, then by Claim~\ref{clm:Reducible} the sequence $(t_1,t_2,t_3)$ is one of $(0,7,7)$, $(0,6,7)$, $(1,6,6)$, $(1,5,7)$, $(2,5,6)$, $(2,4,7)$, or $(3,3,4)$.
        In each of these cases, we have verified by computer that $\Theta(t_1,t_2,t_3)$ has a strong edge-coloring using five colors.
        
        Therefore, $|N_3(v)|\geq 2$ for every $v\in\ct(G)$. 
        We proceed using discharging.
        Assign each vertex initial charge $d(v)$.
        Note that the total charge on the graph is $2e(\ct(G))$, which is at most $\mad(G)n < (2+\frac{1}{7})n$.
        We shall distribute charge among the vertices of $\ct(G)$ and result with charge at least $2 + \frac{1}{7}$ on every vertex, giving a contradiction.
        
        Distribute charge among the vertices according to the following discharging rules, applied to each pair of vertices $u, v \in V(\ct(G))$:
        \begin{enumerate}[(R1)]
            \item\label{2vtx} If $u$ is a 2-vertex and $v \in N_3(u)$, then $v$ sends charge $\frac{1}{14}$ to $u$.
            \item\label{3vtx} If $v$ is a 3-vertex with $|\Resp(v)|\leq 10$ and $u \in N_3(v)$, then
                \begin{enumerate}[(a)]
                    \item\label{adj} if $d(u,v)=1$ and $|\Resp(u)| = 14$,  then $v$ sends charge $\frac{1}{7}$ to $u$;
                    \item\label{near} otherwise, if $d(u,v) \leq 4$, then $v$ sends charge $\frac{1}{14}$ to $u$.
                \end{enumerate}
        \end{enumerate}
        
        We will now verify the assertion that each vertex has final charge at least $2 + \frac{1}{7}$.
	If $v$ is a 2-vertex, then since $|N_3(v)| = 2$ the final charge on $v$ is $2 + \frac{1}{7}$ after by Rule~R\ref{2vtx}.	 Let $v$ be a 3-vertex.	If $u \in N_3(v)$, then $d(u,v) \leq 8$ by Lemma~\ref{lma:caterpillar}.
        Claim~\ref{clm:Reducible} implies that $|\Resp(v)|\leq 14$. 
        
        \begin{mycases}
        
        \mycase{$|\Resp(v)| \in \{11, 12\}$.} In this case, $v$ only loses charge by Rule~R\ref{2vtx}, so the final charge is at least $3 - \frac{12}{14} = 2 + \frac{1}{7}$.
        
        \mycase{$|\Resp(v)| = 14$.} By Claim~\ref{clm:Reducible}, the $Y$-type configuration about $v$ is $Y(0,7,7)$.
        Thus, some vertex $u_1 \in N_3(v)$ is at distance one from $v$.
        If $\mu(v,u_1) = 1$, then the $H$-type configuration about $v$ and $u_1$ is of the form $H(7,7;0;s_1,s_2)$; by Claim~\ref{clm:Reducible} $s_1+s_2 \leq 9$, $|\Resp(u_1)|\leq 9$, and  $u_1$ sends charge $\frac{1}{7}$ to $v$ by Rule~R\ref{adj}.
        If $\mu(v,u_1) = 2$, then the $\Phi$-type configuration about $v$ and $u_1$ is of the form $\Phi(7,0,7,s)$; by Claim~\ref{clm:Reducible} $s = 0$, $|\Resp(u_1)|\leq 7$, and  $u_1$ sends charge $\frac{1}{7}$ to $v$ by Rule~R\ref{adj}.

         \mycase{$|\Resp(v)| = 13$.} By Claim~\ref{clm:Reducible}, the $Y$-type configuration $Y(t_1,t_2,t_3)$ about $v$ is one of $Y(0,6,7)$,  $Y(1,6,6)$, $Y(1,5,7)$, $Y(2,4,7)$, or  $Y(3,3,7)$.
         We consider each case separately.
                  
         \begin{subcases}
         	\subcase{$(t_1,t_2,t_3) = (0,6,7).$} Let $u_1$ be the vertex in $N_3(v)$ at distance 1 from $v$.
        If $\mu(v,u_1) = 1$, then the $H$-type configuration about $v$ and $u_1$ is of the form $H(6,7;0;s_1,s_2)$; by Claim~\ref{clm:Reducible} $s_1+s_2 \leq 9$, $|\Resp(u_1)|\leq 9$, and $u_1$ sends charge $\frac{1}{14}$ to $v$ by Rule~R\ref{near}.
        If $\mu(v,u_1) = 2$, then the $\Phi$-type configuration about $v$ and $u_1$ is of the form $\Phi(6,0,7,s)$ or $\Phi(7,0,6,s)$; by Claim~\ref{clm:Reducible} $s = 0$, $|\Resp(u_1)|\leq 7$, and  $u_1$ sends charge $\frac{1}{14}$ to $v$ by Rule~R\ref{near}.
        
        \subcase{$(t_1,t_2,t_3) = (1,6,6).$} Let $u_1$ be the vertex in $N_3(v)$ at distance 2 from $v$.
        If $\mu(v,u_1) = 1$, then the $H$-type configuration about $v$ and $u_1$ is of the form $H(6,6;1;s_1,s_2)$; by Claim~\ref{clm:Reducible} $s_1+s_2 \leq 8$, $|\Resp(u_1)|\leq 9$, and  $u_1$ sends charge $\frac{1}{14}$ to $v$ by Rule~R\ref{near}.
        If $\mu(v,u_1) = 2$, then the $\Phi$-type configuration about $v$ and $u_1$ is of the form $\Phi(6,1,7,s)$ or $\Phi(7,1,6,s)$; by Claim~\ref{clm:Reducible} $s = 0$, $|\Resp(u_1)|\leq 8$, and  $u_1$ sends charge $\frac{1}{14}$ to $v$ by Rule~R\ref{near}.
        
        \subcase{$(t_1,t_2,t_3) = (1,5,7).$} Let $u_1$ be the vertex in $N_3(v)$ at distance 2 from $v$.
        If $\mu(v,u_1) = 1$, then the $H$-type configuration about $v$ and $u_1$ is of the form $H(5,7;1;s_1,s_2)$; by Claim~\ref{clm:Reducible} $s_1+s_2 \leq 8$, $|\Resp(u_1)|\leq 9$, and  $u_1$ sends charge $\frac{1}{14}$ to $v$ by Rule~R\ref{near}.
        If $\mu(v,u_1) = 2$, then the $\Phi$-type configuration about $v$ and $u_1$ is of the form $\Phi(5,1,7,s)$ or $\Phi(7,1,5,s)$; by Claim~\ref{clm:Reducible} $s = 0$, $|\Resp(u_1)|\leq 8$, and  $u_1$ sends charge $\frac{1}{14}$ to $v$ by Rule~R\ref{near}.

        \subcase{$(t_1,t_2,t_3) = (2,4,7).$} Let $u_1$ be the vertex in $N_3(v)$ at distance 3 from $v$.
        If $\mu(v,u_1) = 1$, then the $H$-type configuration about $v$ and $u_1$ is of the form $H(4,7;2;s_1,s_2)$; by Claim~\ref{clm:Reducible} $s_1+s_2 \leq 7$, $|\Resp(u_1)|\leq 9$, and $u_1$ sends charge $\frac{1}{14}$ to $v$ by Rule~R\ref{near}.
        If $\mu(v,u_1) = 2$, then the $\Phi$-type configuration about $v$ and $u_1$ is of the form $\Phi(4,2,7,s)$ or $\Phi(7,2,4,s)$; by Claim~\ref{clm:Reducible} $s = 0$, $|\Resp(u_1)|\leq 8$, and  $u_1$ sends charge $\frac{1}{14}$ to $v$ by Rule~R\ref{near}.

        \subcase{$(t_1,t_2,t_3) = (3,3,7).$} Let $u_1$ be the vertex in $N_3(v)$ at distance 4 from $v$.
        If $\mu(v,u_1) = 1$, then the $H$-type configuration about $v$ and $u_1$ is of the form $H(3,7;3;s_1,s_2)$; by Claim~\ref{clm:Reducible} $s_1+s_2 \leq 7$, $|\Resp(u_1)|\leq 10$, and  $u_1$ sends charge $\frac{1}{14}$ to $v$ by Rule~R\ref{near}.
        If $\mu(v,u_1) = 2$, then the $\Phi$-type configuration about $v$ and $u_1$ is of the form $\Phi(3,3,7,s)$ or $\Phi(7,3,3,s)$; by Claim~\ref{clm:Reducible} $s = 0$, $|\Resp(u_1)|\leq 10$, and  $u_1$ sends charge $\frac{1}{14}$ to $v$ by Rule~R\ref{near}.
        
        \end{subcases}
        
        \mycase{$|\Resp(v)| \leq 10$.}
        In this case, $v$ loses charge at most $\frac{10}{14}$ by Rule~R\ref{2vtx}, so if it sends charge at most $\frac{1}{7}$ by Rule~R\ref{3vtx}, then the final charge on $v$ is at least $2 + \frac{1}{7}$.
        Consider how much charge is sent by Rule~R\ref{3vtx}.
        
        \begin{subcases}
        \subcase{$v$ sends charge $\frac{3}{14}$ by Rule~R\ref{3vtx}.}
        		If $|\Resp(v)| \leq 9$, then the final charge on $v$ is at least $2 + \frac{1}{7}$, so assume that $|\Resp(v)| = 10$.
        		If $v$ sends charge $\frac{1}{14}$ to each of three vertices in $N_3(v)$, then $d(v,u) \leq 4$ for each $u \in N_3(v)$ and hence $|\Resp(v)| < 10$.		
		Thus, $v$ sends charge $\frac{1}{7}$ to some $u_1 \in N_3(v)$ and $\frac{1}{14}$ to some $u_2 \in N_3(v)$.
		Since $|\Resp(u_1)| = 14$, Claim~\ref{clm:Reducible} implies that the $Y$-type configuration about $u_1$ is of the form $Y(0,7,7)$.
		Since $v$ is adjacent to $u_1$, $d(v,u_2)\leq 4$, and $|\Resp(v)| = 10$, the $Y$-type configuration about $v$ is of the form $Y(0,3,7)$.
		If $\mu(v,u_1)= 1$, then the $H$-type configuration about $v$ and $u_1$ is of the form $H(3,7;0;7;7)$ which is reducible by Claim~\ref{clm:Reducible}.
		If $\mu(v,u_1)= 2$, then the $\Phi$-type configuration about $v$ and $u_1$ is of the form $\Phi(3,7,0,7)$ which is reducible by Claim~\ref{clm:Reducible}.

        \subcase{$v$ sends charge $\frac{2}{7}$ by Rule~R\ref{3vtx}.}
		In this case, $v$ must send charge $\frac{1}{7}$ to at least one vertex $u_1$ in $N_3(v)$.	
		If $v$ sends charge $\frac{1}{7}$ to another vertex $u_2$ in $N_3(v)$, then, as $G$ contains no $8$-caterpillar, $|\Resp(v)| \leq 7$ and hence the final charge on $v$ is at least $2 + \frac{3}{14}$.
		If $v$ sends charge $\frac{1}{14}$ to the other two vertices $u_2$ and $u_3$ in $N_3(v)$, then $|\Resp(v)| \leq 6$ and hence the final charge on $v$ is at least $2 + \frac{5}{14}$.

	\subcase{$v$ either sends charge $\frac{5}{14}$ or $\frac{3}{7}$ by Rule~R\ref{3vtx}.}	
		Suppose that $v$ sends charge $\frac{5}{14}$ by Rule~R\ref{3vtx}. Thus, $v$ must send charge $\frac{1}{7}$ to two of three vertices in $N_3(v)$, and $\frac{1}{14}$ to the third vertex.  This implies that $|\Resp(v)|\le 3$ and hence the final charge on $v$ is at least $2 + \frac{3}{7}$.  Similarly, if $v$ sends charge $\frac{3}{7}$ by Rule~R\ref{3vtx}, then $|Resp(v)|=0$.  Thus, the final charge on $v$ is $2+\frac{4}{7}$.  
        \end{subcases}

        \end{mycases}
        
        In all cases, we verified that the final charge is at least $2 + \frac{1}{7}$, contradicting that the average degree of $\ct(G)$ is strictly less than $2 + \frac{1}{7}$.
    \end{proof}
    
We note that it is possible to improve the bound $\mad(G) < 2 + \frac{1}{7}$ by a small amount.
In particular, the discharging method used above essentially states that the average size of a responsibility set in $\ct(G)$ is at most 12.
By careful analysis, we can find that a 3-vertex $v$ with $|\Resp(v)|\leq 11$ has some excess charge after the discharging argument that could be used to increase the charge on nearby vertices by a small fraction.
We have verified using computation that for every 3-vertex $v$, there is at least one vertex $u \in N_3(v)$ where $|\Resp(u)| < 12$.
Thus, it is impossible to have a minimal counterexample where all responsibility sets have size 12, and it is feasible to construct a discharging argument that will improve on the bound $\mad(G) < 2 + \frac{1}{7}$ by a small fraction.
We do not do this explicitly as it requires significant detail without significant gain.

In order to prove that $\mad(G) < 2 + \frac{1}{6}$ implies that $G$ can be strongly 5-edge-colored, then the proof will imply that the average size of a responsibility set is at most 10.
This will require sending charge to all of the vertices with 11 or 12 vertices in the responsibility set, and also making sure that the charge comes from vertices with responsibility sets much smaller.
Likely, larger reducible configurations will grant some improvement in this direction, but our algorithm is insufficient to effectively test reducibility for larger configurations.\\

\subsection{Proof of Theorem~\ref{thm:sparse4}}

\begin{proof}
Note that the second item of Theorem~\ref{thm:sparse4} follows from the first by Proposition~\ref{prop:mad}.
For the first item, we follow a similar discharging argument as in Theorem~\ref{thm:sparse}.
The argument will be simpler as we will only discharge from $3^+$-vertices to $2\bla$-vertices.
Select a graph $G$ that satisfies the hypotheses and minimizes $n(G)$.
Observe that $\ct(G)$ is 2-connected by an argument similar to Lemma~\ref{lma:cutedge2}.

Since the $6$-caterpillar is $7$-reducible by Claim \ref{claim:red_cat}, $\ct(G)$ does not contain a path of six $2\bla$-vertices.
Since $G$ has girth at least 7, $\ct(G)$ is not a cycle, so it contains at least one $3^+$-vertex.

%[What other ``base cases'' do we need to consider? Figure 8's? Thetas? 4-Thetas?]

If $v$ is a $3^+$-vertex, then let $\Resp(v)$ be the set of $2\bla$-vertices reachable from $v$ using only $2\bla$-vertices.
We consider $\Resp(v)$ to be a multiset, where the multiplicity of a vertex $u \in \Resp(v)$ is given by the number of paths from $v$ to $u$ using only $2\bla$-vertices. 
Note that the multiplicity is either 1 or 2.

Assign charge $d_{\ct(G)}(v)$ to each vertex $v \in V(\ct(G))$.
Note that the average charge on each vertex is equal to the average degree of $G$.
To discharge, let $\varepsilon = \frac{1}{13}$ and each $3^+$-vertex $v$ sends $\varepsilon m$ to each $2\bla$-vertex in $\Resp(v)$ with multiplicity $m$.
Thus, every $2\bla$-vertex ends with charge $2 + \frac{2}{13}$.

Suppose $d_{\ct(G)}(v) = 3$.
Since $\ct(G)$ is 2-connected, all vertices in $\Resp(v)$ appear with multiplicity one.
By Claim~\ref{clm:Reducible4}, $|\Resp(v)| \leq 11$. % (and if $|\Resp(v)| = 11$ the $Y$-type configuration about $v$ is $Y(2,4,5)$..
Thus each $3$-vertex ends with charge  at least $3 - \frac{11}{13} = 2 + \frac{2}{13}$. 

Suppose $d_{\ct(G)}(v) = 4$.
Since the $(6,4)$-caterpillar is reducible, each path of $2\bla$-vertices has length at most five, and hence $|\Resp(v)| \leq 20$, including multiplicity. % (and if $|\Resp(v)| = 11$ the $Y$-type configuration about $v$ is $Y(2,4,5)$..
Thus each $4$-vertex ends with charge  at least $4 - \frac{20}{13} = 2 + \frac{6}{13} > 2 + \frac{2}{13}$. 

Therefore, every vertex ends with charge at least $2 + \frac{2}{13}$ and thus the average degree of $G$ is at least $2 + \frac{2}{13}$, a contradiction.
\end{proof}
%
%Based on our computational evidence, we have a conjecture for general $Y$-type configurations.    
%
%\begin{conj}
%Let $d \geq 4$.
%If $2(2d-3)+2 = t_1 + t_2 + t_3$ where  $0 \leq t_1 \leq t_2 \leq t_3 \leq  2d-3$, then the configuration $Y(t_1,t_2,t_3)$ is $(2d-1)$-reducible.
%\end{conj}
%
%This conjecture implies that a graph with maximum degree at most $d$, girth at least $2d-1$, and maximum average degree $2 + \frac{2}{4d-1}$ has $\chi_s'(G) \leq 2d-1$, an improvement over Theorem~\ref{thm:wz}.
%Reducibility of larger configurations, such as $H$-type and $\Phi$-type, would allow for a more involved discharging argument and then better bounds. 
%
%\section*{Acknowledgments}
%
%This collaboration began as part of the 2014 Rocky Mountain--Great Plains Graduate Research Workshop in Combinatorics, supported in part by NSF Grant \#1427526.
%        

 \end{document}